\renewcommand{\baselinestretch}{1.2}
\newtheorem{theorem}{Theorem}[section]
\newtheorem{proposition}[theorem]{Proposition}
\newtheorem{lemma}[theorem]{Lemma}
\newtheorem{definition}[theorem]{Definition}
\renewcommand{\thefootnote}
\newcommand{\be}{\begin{equation}}
\newcommand{\ee}{\end{equation}}
\begin{document}

\renewcommand{\baselinestretch}{1.2}

\title {Feasible Newton's methods for symmetric tensor $Z$-eigenvalue problems}

\author{Dong-Hui Li\thanks{Email: lidonghui@m.scnu.edu.cn.} \ \ \  Xueli Bai\thanks{Email: 20200592@m.scnu.edu.cn} \ \ and \ \ Jiefeng Xu\thanks{Email: 2018021699@m.scnu.edu.cn}\\
School of Mathematical Sciences, \\
South China Normal University, Guangzhou, 510631, China }

\maketitle

\begin{abstract}
  Finding a $Z$-eigenpair of a symmetric tensor is equivalent to finding a KKT point of a sphere constrained minimization problem. Based on this equivalency, in this paper, we first propose a class of iterative methods to get a Z-eigenpair of a symmetric tensor. Each method can generate a sequence of  feasible points such that the sequence of function evaluations is decreasing. These methods can be regarded as extensions of the descent methods for unconstrained optimization problems. We pay particular attention to the Newton method. We show that under appropriate conditions, the Newton method is globally and quadratically convergent. Moreover, after finitely many iterations, the unit steplength will always be accepted. We also propose a nonlinear equations based Newton's method and establish its global and quadratic convergence. In the end, we do several numerical experiments to test the proposed Newton methods. The results show that both Newton methods are very efficient.
\end{abstract}

\noindent{\bf Keywords:} $Z$-eigenvalue problem, Descent method, Newton's method, Global convergence, Quadratic convergence

\section{Introduction}\label{intro}

Let $\mathbb{R}$ be the real field. ${\cal A}$ is called an $m$-th order $n$-dimensional real tensor if it takes the form
$$
{\cal A}=(a_{i_1i_2\ldots i_m}),\quad a_{i_1i_2\ldots i_m}\in\mathbb{R},\quad 1\leq i_1,i_2\cdots,i_m\leq n.
$$
We use $\mathbb{R}^{[m,n]}$ to denote the set of all real tensors of order $m$ and dimension $n$. If the elements of ${\cal A}$ are invariant under arbitrary permutation of their indices, then ${\cal A}$ is called a symmetric tensor. The set of all $m$-th order $n$-dimensional symmetric tensors is denoted by $\mathbb{S}^{[m,n]}$. For a tensor ${\cal A}=(a_{i_1i_2\ldots i_m})\in\mathbb{R}^{[m,n]}$ and a vector
$x=(x_1,\ldots,x_n)^T\in\mathbb{R}^n$, ${\cal A}x^{m-1}$ denotes a $n$-dimensional vector whose $i$-th element is defined as
$$
({\cal A}x^{m-1})_i:=\sum^n_{i_2,\ldots,i_m=1}a_{ii_2\ldots i_m}x_{i_2}\cdots x_{i_m},\quad \forall i\in\{1,2,\ldots,n\}.
$$
And the notation ${\cal A}x^{m}$ denotes a homogeneous polynomial of degree $m$:
$$
{\cal A}x^{m}:=\langle x,{\cal A}x^{m-1}\rangle=\sum^n_{i_1,\ldots,i_m=1}a_{i_1\ldots i_m}x_{i_1}\cdots x_{i_m}.
$$
It is well-known that if $\cal A$ is symmetric, then, the gradient of the homogenous function ${\cal A}x^m$ is $m{\cal A}x^{m-1}$, i.e.,
\[
\nabla \Big ({\cal A}x^m \Big )=m{\cal A} x^{m-1},\quad\forall x\in \mathbb R^n,
\]
and the Hessian is $m(m-1){\cal A}x^{m-2}$, where  ${\cal A}x^{m-2}\in\mathbb{R}^{n\times n}$ whose $(i,j)$-th element is given by
$$
({\cal A}x^{m-2})_{ij}:=\sum^n_{i_3,\ldots,i_m=1}a_{iji_3\ldots i_m}x_{i_3}\cdots x_{i_m},\quad \forall i,j\in\{1,2,\ldots,n\}.
$$

The concept of tensor eigenvalues was introduced by Qi and Lim, independently.
\begin{definition}\label{eig}{\rm (\cite{Lim05,Qi05})}
Let ${\cal A}\in\mathbb{R}^{[m,n]}$. We say that $\lambda\in\mathbb{R}$ is an $H$-eigenvalue
and $x\in\mathbb{R}^n\backslash\{0\}$ is the corresponding $H$-eigenvector of ${\cal A}$ if
\begin{eqnarray*}\label{Heig}
{\cal A}x^{m-1}=\lambda x^{[m-1]},
\end{eqnarray*}
where $x^{[m-1]}:=(x_1^{m-1},\ldots,x_n^{m-1})^T\in\mathbb{R}^n$. $\lambda$ is called a
$Z$-eigenvalue and $x$ is the corresponding $Z$-eigenvector of ${\cal A}$ if
\begin{equation}\label{Zeig}
{\cal A}x^{m-1}=\lambda x\quad\mbox{and}\quad x^Tx=1.
\end{equation}
\end{definition}

In this paper, we focus on tensor $Z$-eigenvalues, which is closely related to many practical applications,
such as blind source separation \cite{KR02}, molecular conformation \cite{D88}, magnetic resonance imaging \cite{QWW08}, Bose-Einstein Condensation \cite{YHL19}, and quantum information \cite{NQB14}. Over the past decades, the tensor $Z$-eigenvalue has been studied extensively both in theory and algorithms (see e.g., references \cite{DW15,QL17,QCC18}).

Although, it has been mentioned in \cite{HL13} that, in general, computing eigenpairs of higher order tensors is NP-hard, in recent years, a mass of papers have been contributed to designing effective
algorithms to solve tensor $Z$-eigenvalue problems with different structured tensors involved. Some of them are aimed at finding all $Z$-eigenvalues, for examples, Qi et al. \cite{QWW09} proposed a direct orthogonal transformation $Z$-eigenvalue method for supersymmetric tensors ${\cal A}\in\mathbb{R}^{[3,3]}$; Cui et al. \cite{CDN14} addressed a sophisticated Jacobian semidefinite relaxation method for symmetric tensors; Jaffe et at. \cite{JWN18} gave a framework of a Newton correction iteration method for symmetric tensors. Some of them are with the purpose of finding one $Z$-eigenpair. Kolda and Mayo \cite{KM11} presented a shifted power method for computing real symmetric tensors' $Z$-eigenpairs with linear convergence rate, and then, extended it to the generalized tensor eigenvalue problems in \cite{KM14}; Yu et al. \cite{YYXSZ16} proposed a linearly convergent adaptive gradient method; Guo et al. \cite{GLL18} developed a modified local Newton iteration for computing nonnegative $Z$-eigenpairs of nonnegative tensors, which enjoys locally quadratic convergence property under proper assumptions; Zhang et al. \cite{ZNG20} introduced a Newton method for newly defined almost nonnegative irreducible tensors. The other works are interested in finding the extreme $Z$-eigenvalues. For example, Hu et al. \cite{HHQ13} employed a sequential semidefinite programming method by formulating optimization problem into a tensor conic linear programming; Hao et al. \cite{HCD15} proposed a sequential subspace projection method for symmetric tensors by solving the original optimization problem in a 2-dimensional subspace at the current point, and they also showed a trust region method which enjoys a locally quadratic convergence rate in \cite{HCD152}.

So far, the study in Newton's method is relatively few, especially feasible Newton method. In this paper,
we are interested in designing feasible Newton methods for the $Z$-eigenvalue problem, which is to find a solution $(x,\lambda)$ of the system (\ref{Zeig}). Two kinds of approaches will be considered for this problem. One of them is the optimization based approach, which is to find a Karush-Kuhn-Tucker (KKT) point of the following sphere constrained optimization problem:
$$
\max _{x\in \mathbb{B}} /\min _{x\in \mathbb{B}} \phi (x)= \frac 1m {\cal A}x^m,
$$
where $\mathbb{B}=\{x\in \mathbb R^n\mid x^Tx=1\}$ is the unit sphere centered at the origin. The meaning of $\max/\min$ is that we can choose either to maximize or to minimize $\phi (x)$. The key point is that if ${\cal A}\in\mathbb{S}^{[m,n]}$, then, the $Z$-eigenpair $(x,\lambda)$ of ${\cal A}$ is the same as the KKT point of the above problem. Another approach is the nonlinear equations based method. Define $F(x)={\cal A}x^{m-1}- {\cal A}x^m\cdot x$. Since a $Z$-eigenvector $x$ of ${\cal A}$ satisfies $x^Tx=1$, it is easy to see that the corresponding $Z$-eigenvalue $\lambda$ is $\lambda={\cal A}x^{m}$. Therefore, solving tensor $Z$-eigenvalue problem (\ref{Zeig}) is equivalent to solving the following constrained equations:
$$
F(x)=0,\quad x\in \mathbb{B}.
$$
We will develop two kinds of feasible Newton methods to get a Z-eigenpair of a symmetric tensor from the above two aspects, respectively.

The rest of this paper is organized as follows. In Section \ref{OptNewton}, we first propose a general feasible descent method based on the equivalent optimization reformulation of the $Z$-eigenvalue problem.
Then, we pay particular attention to the Newton method for the same problem. We show that under reasonable conditions, the feasible descent Newton method is globally and quadratically convergent. Especially, the unit steplength is essentially accepted. In Section \ref{EqnNewton}, we present a nonlinear equations based Newton method and also establish its global and quadratic convergence. In Section \ref{num}, we do some numerical experiments to indicate the effectiveness of the proposed algorithms. Conclusions are given in Section \ref{con}.

Throughout this paper, we use lowercases $x,y,z,\ldots$ for vectors, capital letters $A,B,C,\ldots$ for matrices and calligraphic letters ${\cal A}, {\cal B}, {\cal C},\ldots$ for tensors. We denote $[n]=\{1,2,\ldots,n\}$ and $\mathbb{B}=\{x\in \mathbb R^n\mid x^Tx=1\}$. In addition, we use $I\in\mathbb{R}^{n\times n}$ and ${\cal I}\in\mathbb{R}^{[m,n]}$ to denote the identity matrix and identity tensor, respectively. Finally, the Jacobian of a continuously differentiable function $F:\mathbb{R}^n\rightarrow\mathbb{R}^n$ at $x\in\mathbb{R}^n$ is denoted by $F'(x)$, and the gradient and the Hessian of a twice continuously differentiable function $H:\mathbb{R}^n\rightarrow\mathbb{R}$ at $x\in\mathbb{R}^n$ is given by $\nabla H(x)$ and $\nabla^2 H(x)$, respectively.

\section{An optimization based Newton's method for symmetric tensors}\label{OptNewton}

In this section, we first propose a class of iterative methods for finding a $Z$-eigenpair of a symmetric tensor ${\cal A}\in\mathbb{S}^{[m,n]}$, which is equivalent to finding a KKT point of the sphere constrained optimization problem:
\begin{equation}\label{opt}
\min_{x\in \mathbb{B}} \phi (x):=\frac 1m {\cal A}x^m,
\end{equation}
or
\begin{equation}\label{opt-max}
\max _{x\in \mathbb{B}} \phi (x):=\frac 1m {\cal A}x^m,
\end{equation}

As our purpose is to find one $Z$-eigenpair of $\cal A$, we will only focus our attention on problem (\ref{opt}). If the purpose is to find the largest $Z$-eigenvalue and its corresponding eigenvector,
 we can consider to solve problem (\ref{opt-max}) in a similar way.

We first propose a general feasible descent method to find a KKT point of problem (\ref{opt}) in subsection \ref{Opt-descent}. The method generates a sequence of iterates that are feasible in the sense that they are on the sphere $\mathbb B$. And a sufficient condition for the method to be globally convergent is established. In subsection \ref{opt-newton}, we focus on Newton's method and present a feasible Newton method to find a Z-eigenpair of a symmetric tensor. We will show that under reasonable conditions, this method is globally and quadratically convergent. Moreover, the unit steplength will be accepted after finitely many iterations.

\subsection{A general descent method}\label{Opt-descent}

For any $x\in\mathbb{B}$ and $d\in\mathbb{R}^n$, let $x(\alpha)$ be the projection of $x+\alpha d$
onto $\mathbb B$, i.e., $x(\alpha)=\frac {x+\alpha d}{\|x+\alpha d\|}$. Here and throughout the paper,
without specification, we always use $\|x\|$ to denote the Euclid norm of a vector $x$, i.e., $
\|x\|=\sqrt {x^Tx}$. By an elementary deduction, we obtain that, for any $x\in \mathbb B$,
 \begin{eqnarray*}
 &&x(\alpha) = \frac {x+\alpha d}{\|x+\alpha d\|}= (x+\alpha d) + \frac {1-\|x+\alpha d\|}{\|x+\alpha d\|} (x+\alpha d)\\
    &&= (x+\alpha d)+  \frac {1-\|x+\alpha d\|^2}{\|x+\alpha d\| (1+\|x+\alpha d\|)} (x+\alpha d)\\
    &&=  (x+\alpha d) -\frac {2\alpha d^Tx +\alpha ^2\|d \|^2}{\|x+\alpha d\| (1+\|x+\alpha d\|)} (x+\alpha d)\\
    &&= (x+\alpha d) -\frac {2\alpha d^Tx }{\|x+\alpha d\| (1+\|x+\alpha d\|)} x + O(\alpha ^2\|d \|^2)\\
    &&= x+\alpha (I - xx^T)d +\alpha \Big (1 -\frac {2 }{\|x+\alpha d\| (1+\|x+\alpha d\|)}\Big ) (d^Tx) x + O(\alpha ^2\|d \|^2).
\end{eqnarray*}
We estimate the third term of the last equality and get
\begin{eqnarray*}
1 -\frac {2 }{\|x+\alpha d\| (1+\|x+\alpha d\|)} &=&\frac {\|x+\alpha d\| (1+\|x+\alpha d\|)-2 }{\|x+\alpha d\| (1+\|x+\alpha d\|)} \\
    &=& \frac {\|x+\alpha d\|+\|x+\alpha d\|^2-2 }{\|x+\alpha d\| (1+\|x+\alpha d\|)}\\
    &=& \frac {\|x+\alpha d\|-1+2\alpha d^Tx +\alpha ^2\|d\|^2 }{\|x+\alpha d\| (1+\|x+\alpha d\|)}\\
    &=& \frac {3\alpha d^Tx}{\|x+\alpha d\| (1+\|x+\alpha d\|)}+ O(\alpha ^2\|d\|^2).
\end{eqnarray*}
The arguments above give the following relation
\begin{eqnarray}\label{iter2}
x(\alpha)-x=\alpha (I - xx^T)d +O(\alpha ^2\|d\|^2).
\end{eqnarray}
By the use of the mean-value theorem, we can obtain that
\begin{eqnarray}\label{mean}
\phi(x(\alpha))&=&\phi(x)+[\nabla\phi(x)]^T(x(\alpha)-x)+O(\|x(\alpha)-x\|^2)\nonumber \\
&=& \phi(x)+\alpha[\nabla\phi(x)]^T(I-xx^T)d+O(\|\alpha d\|^2).
\end{eqnarray}

For the sake of convenience, we introduce the following concept of curve descent direction, which is an extension of a descent direction.
\begin{definition}\label{def:c-descent}
Let $d\in \mathbb R^n$ and
\[
x(\alpha)=\frac {x+\alpha d}{\|x+\alpha d\|}.
\]
We call $d$ a curve descent direction of function $\phi$ at $x$ if
\[
\phi (x(\alpha))<\phi (x)
\]
holds for all $\alpha>0$ sufficiently small. If $-d$ is a curve descent direction of $\phi $ at $x$,
then, $d$ is called a curve ascent direction of $\phi$ at $x$.
\end{definition}

It is easy to see from (\ref{mean}) that if a direction $d$ satisfies $ [\nabla\phi(x)]^T(I-xx^T)d<0$,
then, it is a curve descent direction of $\phi$ at $x$. In that case, for any constant $\sigma\in(0,1)$, the inequality
\begin{equation}\label{search:cond}
\phi(x(\alpha))\le \phi(x)+\alpha\sigma \, [\nabla\phi(x)] ^T(I-xx^T)d
\end{equation}
holds for all $\alpha>0$ sufficiently small.

Observing that $x(\alpha)\in \mathbb B$ for all $\alpha\in \mathbb R$, the above process motives us to
propose a feasible descent method for finding a $Z$-eigenpair of a symmetric tensor $\cal A$. Specifically, at iteration $k$, suppose that we have already had a feasible point $x_k\in \mathbb B$, we then find a curve descent direction $d_k$ satisfying $[\nabla\phi(x_k)]^T(I-x_kx_k^T)d_k<0$. By using a backtracking line search technique, we determine a steplength $\alpha_k>0$ satisfying the inequality (\ref{search:cond}) with $x=x_k$, $d=d_k$ and $\alpha=\alpha_k$. The next iterate $x_{k+1}$ is then determined by $x_{k+1}=x_k(\alpha_k)$, which is also a feasible point.

The key of the above descent method is the determination of the curve descent direction $d_k$. Similar to
the descent method in the unconstraint optimization, we give a general choice for $d_k$:
\begin{eqnarray}\label{direction-opt}
d_k=-B_k^{-1}(I-x_kx_k^T)\nabla\phi(x_k)=-B_k^{-1}F (x_k),
\end{eqnarray}
where $B_k\in\mathbb{R}^{n\times n}$ is symmetric and positive definite and
$F(x)={\cal A}x^{m-1}-{\cal A}x^m\cdot x=(I-xx^T){\cal A}x^{m-1}$. It is easy to see that such a direction $d_k$ satisfies
\begin{eqnarray*}
[\nabla\phi(x_k)]^T(I-x_kx_k^T)d_k&=& - [\nabla \phi (x_k)]^T (I-x_kx_k^T)B_k(I-x_kx_k^T)\nabla \phi (x_k)\\
&=&- [F(x_k)]^TB_kF(x_k)\le 0.
\end{eqnarray*}
Moreover, $[\nabla\phi(x_k)]^T(I-x_kx_k^T)d_k=0$ if and only if $F(x_k)=0$, which means that $x_k$ is a $Z$-eigenvector of $\cal A$ with corresponding eigenvalue $\lambda _k={\cal A}x^m_k$. Hence, we have the following proposition.

\begin{proposition}\label{dec-dir}
Given a vector $x_k\in \mathbb B$ and a symmetric and positive definite matrix $B_k\in\mathbb{R}^{n\times n}$. Suppose that $x_k$ is not a $Z$-eigenvector of ${\cal A}$. Then, $d_k$ defined by (\ref{direction-opt}) is a curve descent direction of $\phi $ at $x_k$ which satisfies
\[
[\nabla\phi(x_k)]^T(I-x_kx_k^T)d_k< 0.
\]
Consequently, the inequality (\ref{search:cond}) with $x=x_k$ and $d=d_k$ holds for all $\alpha>0$
sufficiently small.
\end{proposition}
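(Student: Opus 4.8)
The plan is to derive everything from the sign of the first‑order term in the expansion (\ref{mean}), which in turn reduces to a one‑line quadratic‑form computation. First I would compute $[\nabla\phi(x_k)]^T(I-x_kx_k^T)d_k$ directly. Recall $\nabla\phi(x_k)={\cal A}x_k^{m-1}$, so that $(I-x_kx_k^T)\nabla\phi(x_k)=F(x_k)$; since the matrix $I-x_kx_k^T$ is symmetric, this gives $[\nabla\phi(x_k)]^T(I-x_kx_k^T)=F(x_k)^T$. Substituting the definition (\ref{direction-opt}) of $d_k$ then yields
\[
[\nabla\phi(x_k)]^T(I-x_kx_k^T)d_k=F(x_k)^Td_k=-F(x_k)^TB_k^{-1}F(x_k).
\]
Because $B_k$ is symmetric positive definite, so is $B_k^{-1}$, hence the right‑hand side is $\le 0$ and vanishes precisely when $F(x_k)=0$.

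Next I would rule out $F(x_k)=0$ using the hypothesis. If $F(x_k)={\cal A}x_k^{m-1}-({\cal A}x_k^m)\,x_k=0$, then, together with $x_k^Tx_k=1$ (which holds since $x_k\in\mathbb B$), the pair $(x_k,\lambda_k)$ with $\lambda_k={\cal A}x_k^m$ satisfies (\ref{Zeig}), i.e.\ $x_k$ is a $Z$‑eigenvector of ${\cal A}$ — contradicting the assumption. Therefore $F(x_k)\neq 0$, and the computation above sharpens to the strict inequality $[\nabla\phi(x_k)]^T(I-x_kx_k^T)d_k<0$.

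Finally, with $c:=[\nabla\phi(x_k)]^T(I-x_kx_k^T)d_k<0$, I would read off the two conclusions from (\ref{mean}). Setting $g(\alpha):=\phi(x_k(\alpha))$, relation (\ref{mean}) reads $g(\alpha)=g(0)+\alpha c+O(\alpha^2\|d_k\|^2)$, so for all sufficiently small $\alpha>0$ the dominant term $\alpha c$ is negative and $g(\alpha)<g(0)$; this is exactly the defining property of a curve descent direction in Definition \ref{def:c-descent}. For the Armijo‑type bound, fix $\sigma\in(0,1)$ and write $g(\alpha)-g(0)-\alpha\sigma c=\alpha(1-\sigma)c+O(\alpha^2\|d_k\|^2)$; since $(1-\sigma)c<0$, the right‑hand side is negative for all small $\alpha>0$, which is precisely (\ref{search:cond}) with $x=x_k$ and $d=d_k$.

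There is no genuine obstacle here; the only points that need a little care are the bookkeeping that turns $[\nabla\phi(x_k)]^T(I-x_kx_k^T)$ into $F(x_k)^T$ via symmetry of the projection, and the elementary but essential observation that, on the sphere $\mathbb B$, the equation $F(x_k)=0$ is nothing but the $Z$‑eigenpair system (\ref{Zeig}). Everything else is the standard one‑dimensional line‑search estimate applied to the smooth scalar map $\alpha\mapsto\phi(x_k(\alpha))$.
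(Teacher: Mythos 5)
Your proof is correct and follows essentially the same route as the paper: compute $[\nabla\phi(x_k)]^T(I-x_kx_k^T)d_k=-F(x_k)^TB_k^{-1}F(x_k)$, use positive definiteness and the non-eigenvector hypothesis to get strict negativity, and then read off both conclusions from the expansion (\ref{mean}). (In fact your quadratic form correctly carries $B_k^{-1}$, whereas the paper's displayed computation writes $B_k$ there --- a harmless slip since both are symmetric positive definite.)
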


The general feasible descent method for finding a $Z$-eigenpair of a symmetric tensor ${\cal A}$ is stated below in Algorithm \ref{alg1}.

\begin{algorithm}[!htbp]
\caption{(A general descent method for problem (\ref{opt})).}\label{alg1}
\begin{algorithmic}[1]
\STATE Given a constant $\sigma\in (0,1)$, an initial point $x_0\in \mathbb{B}$ and a symmetric and positive definite matrix sequence $\{B_k\}$.
Let $k:=0$ and $\lambda_0={\cal A}x_0^m$.
\WHILE{$F(x^k)\neq 0$}
\STATE Let $x_k(\alpha)$ and $d_k$ be defined by
$$
x_k(\alpha)=\frac{x_k+\alpha d_k}{\|x_k+\alpha d_k\|} \quad \mbox {and}\quad d_k=-B_k^{-1}(I-x_kx_k^T)\nabla\phi(x_k).
$$
\STATE Find $\alpha_k\in(0,1]$ by a backtracking process satisfying
\begin{equation}\label{armijo}
\phi(x_k(\alpha_k))\le\phi(x_k)-\sigma \alpha_k[F(x_k)]^TB_k^{-1}F(x_k).
\end{equation}
\STATE Let $x_{k+1}=x_k(\alpha_k)$ and $\lambda_{k+1}={\cal A}x_{k+1}^m$. Let $k:=k+1$.
\ENDWHILE
\end{algorithmic}
\end{algorithm}

Similar to the convergence of a descent method for solving unconstrained optimization problem, we can show the global convergence for Algorithm \ref{alg1}. Since the proof is standard, we only present the conclusion and omit the proof.
\begin{theorem}\label{converg}
Suppose that there are positive constants $0<m\le M$ such that the matrix sequence $\{B_k\}$ in Algorithm \ref{alg1} satisfies
$$
m\|d\|^2\le d^TB_kd\le M\|d\|^2,\quad \forall d\in\mathbb{R}^n.
$$
Then, every accumulation point $\bar{x}$ of the sequence $\{x_k\}$ generated by Algorithm \ref{alg1}
is a $Z$-eigenvector of ${\cal A}$ corresponding to the eigenvalue $\bar{\lambda}={\cal A}\bar{x}^m$.
\end{theorem}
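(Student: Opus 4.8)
The plan is to follow the classical convergence proof for gradient-type descent methods in unconstrained optimization, carried over to the sphere via the retraction $x(\alpha) = (x+\alpha d)/\|x+\alpha d\|$ and the expansions \eqref{iter2} and \eqref{mean} already derived above. First I would show that the backtracking line search at each iteration is well-defined: by Proposition \ref{dec-dir}, whenever $x_k$ is not a $Z$-eigenvector we have $[\nabla\phi(x_k)]^T(I-x_kx_k^T)d_k = -[F(x_k)]^TB_k^{-1}F(x_k) < 0$, so \eqref{mean} guarantees that \eqref{armijo} holds for all sufficiently small $\alpha>0$; hence a backtracking step terminates with some $\alpha_k \in (0,1]$, and the sequence $\{\phi(x_k)\}$ is strictly decreasing. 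Since $\phi$ is continuous and $\mathbb B$ is compact, $\{\phi(x_k)\}$ is bounded below and therefore converges, which forces $\alpha_k[F(x_k)]^TB_k^{-1}F(x_k) \to 0$.

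Next I would argue by contradiction. Let $\bar x$ be an accumulation point of $\{x_k\}$, say $x_{k_j}\to\bar x$, and suppose $F(\bar x)\neq 0$. The uniform bounds $m\|d\|^2 \le d^TB_kd \le M\|d\|^2$ imply the same kind of bounds for $B_k^{-1}$ (eigenvalues in $[1/M, 1/m]$), so along the subsequence $[F(x_{k_j})]^TB_{k_j}^{-1}F(x_{k_j}) \ge \|F(x_{k_j})\|^2/M \to \|F(\bar x)\|^2/M > 0$. Combined with $\alpha_{k_j}[F(x_{k_j})]^TB_{k_j}^{-1}F(x_{k_j}) \to 0$, this gives $\alpha_{k_j}\to 0$. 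Then the backtracking rule says that for $j$ large the trial steplength $\alpha_{k_j}/\rho$ (where $\rho\in(0,1)$ is the backtracking factor, or more generally the previous trial value) failed the test \eqref{armijo}, i.e.
\[
\phi\!\left(x_{k_j}(\alpha_{k_j}/\rho)\right) > \phi(x_{k_j}) - \sigma (\alpha_{k_j}/\rho)\,[F(x_{k_j})]^TB_{k_j}^{-1}F(x_{k_j}).
\]
Using the second-order expansion \eqref{mean} at $x_{k_j}$ with $d = d_{k_j}$ and dividing by $\alpha_{k_j}/\rho$, then letting $j\to\infty$ (so $\alpha_{k_j}/\rho \to 0$, $d_{k_j}\to -\bar B^{-1}F(\bar x)$ along a further subsequence on which $B_{k_j}$ converges to some $\bar B$, and the $O(\|\alpha d\|^2)$ term vanishes), I would obtain
\[
[\nabla\phi(\bar x)]^T(I-\bar x\bar x^T)\bar d \;\ge\; \sigma\,[\nabla\phi(\bar x)]^T(I-\bar x\bar x^T)\bar d,
\]
where $\bar d = -\bar B^{-1}F(\bar x)$ and the left side equals $-[F(\bar x)]^T\bar B^{-1}F(\bar x) < 0$. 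Since $\sigma\in(0,1)$, this is a contradiction, so $F(\bar x)=0$. Finally, since $x_k\in\mathbb B$ for all $k$ and $\mathbb B$ is closed, $\bar x\in\mathbb B$, and $F(\bar x)=(I-\bar x\bar x^T){\cal A}\bar x^{m-1}=0$ means ${\cal A}\bar x^{m-1}=({\cal A}\bar x^m)\bar x$, i.e. $\bar x$ is a $Z$-eigenvector of ${\cal A}$ with eigenvalue $\bar\lambda = {\cal A}\bar x^m$, as claimed.

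The main obstacle, and the only place requiring care, is the passage to the limit in the failed-step inequality: one must keep careful track of the two levels of subsequencing (first to extract $x_{k_j}\to\bar x$, then a further subsequence along which $\{B_{k_j}\}$ converges so that $\{d_{k_j}\}$ converges), and one must make sure the error terms are genuinely uniform — i.e. that the $O(\|\alpha d\|^2)$ constant in \eqref{mean} can be taken independent of $k$. This uniformity follows from the compactness of $\mathbb B$ (on which $\nabla\phi$ and $\nabla^2\phi$ are bounded, being polynomial) together with the uniform bound $\|d_k\|\le \|B_k^{-1}F(x_k)\| \le (1/m)\sup_{x\in\mathbb B}\|F(x)\|$. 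Everything else is the standard Zoutendijk/Armijo bookkeeping, which is why the authors are content to state the result and omit the proof.
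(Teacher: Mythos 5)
Your proof is correct, and it is precisely the standard Armijo/backtracking argument that the authors invoke when they write ``the proof is standard'' and omit it; it also parallels the paper's own proof of Theorem~\ref{converg2}, where the same dichotomy (steplengths bounded away from zero versus $\alpha_k\to 0$ with a failed trial step $\alpha_k/\rho$) is used. Your added care about the uniformity of the $O(\|\alpha d\|^2)$ term and the boundedness of $\{d_k\}$ via $\|B_k^{-1}\|\le 1/m$ is exactly the bookkeeping the omitted proof would need, so nothing is missing.
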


\subsection{A local Newton method and its globalization}\label{opt-newton}

In this subsection, we consider using Newton's method to find a $Z$-eigenpair of a symmetric tensor $\cal A$. We first propose a feasible local Newton method and then, globalize it by using some line search technique.

Recall that the $Z$-eigenvalue problem is equivalent to the constrained nonlinear equations
\begin{eqnarray}\label{equ-equa}
F(x)={\cal A}x^{m-1}-{\cal A}x^m\cdot x=(I-xx^T)\nabla \phi (x)=0,\quad x\in \mathbb B.
\end{eqnarray}
It is indeed a system of nonlinear equations with $n+1$ equations but $n$ variables. In general, such
a system may have no solutions. However, we notice that the function $F(x)$ defined by (\ref{equ-equa})
satisfies $[F(x)]^Tx=0$ for any $x\in \mathbb B$, i.e.,
\[
F(x)\in x^\perp \stackrel\triangle {=} \{d\in \mathbb R^n\;|\; x^Td=0\},\quad\forall x\in \mathbb B.
\]
If we let $U_x\in \mathbb R^{n\times (n-1)}$ be a matrix whose columns form an orthogonal basis of the
subspace $x^\perp$, then, the constrained equations (\ref{equ-equa}) is equivalent to the following system of nonlinear equations:
\begin{equation}\label{temp:equ}
U_x F(x)=0\quad\mbox{and}\quad x^Tx-1=0.
\end{equation}

With the above preparation, we will develop a feasible Newton method with the subproblem being
the following system of linear equations in $d_k$:
\begin{equation}\label{sub:local-N}
U_{x_k}^T  F'(x_k)d_k+U^T_{x_k}F(x_k)=0\quad \mbox{and}\quad x_k^Td_k=0,
\end{equation}
which is obtained by using the approximate relationships
\[
F(x_k)\approx F(x_k)+F'(x_k)d_k\quad \mbox{and}\quad x_k^Tx_k-1\approx (x_k^Tx_k-1)+ x_k^Td_k
\]
and the fact that $x_k^Tx_k=1$, where the Jacobian of $F$ at $x$ is give by
$F'(x)=(m-1){\cal A}x^{m-2}-{\cal A}x^m\cdot I - m x ( {\cal A}x^{m-1})^T$. Then, the Newton direction $d_k$ can be obtained by solving the following lower dimensional system of linear equations:
\begin{equation}\label{sub:local-N-a}
d_k=U_{x_k}u_k,\quad (U_{x_k}^T  F'(x_k)U_{x_k} ) u_k +U^T_{x_k}F(x_k)=0.
\end{equation}
If the matrix $(U_{x_k}^T  F'(x_k)U_{x_k} )$ is nonsingular, then, the Newton direction exists and is unique. In particular, if the second order sufficient condition holds at $x_k$, then, the matrix $(U_{x_k}^T  F'(x_k)U_{x_k} )$ is positive definite, and hence, the Newton direction is the unique solution to (\ref{sub:local-N}) or (\ref{sub:local-N-a}).

Since the Newton step $x_k+d_k$ may not be on the unit sphere $\mathbb B$, we then consider to use its
projection onto $\mathbb B$ to be the next iterate, that is,
\begin{equation}\label{iter-Newton}
x_{k+1}=\frac {x_k+d_k}{\|x_k+d_k\|},\quad k=0,1,\dots
\end{equation}

We call the iterative process (\ref{sub:local-N}) and (\ref{iter-Newton}) a feasible local Newton method
because the sequence of iterates $\{x_k\}$ generated by this method is contained in the feasible set $\mathbb B$.

Now, we are going to analyse the convergence of the proposed local Newton method. To this end, we need the following assumption.
\vspace{2mm}

\noindent
{\bf Assumption (A)} There is a KKT point $\bar x$ of problem (\ref{opt}) satisfying the second order sufficient condition.
In other words, the point $\bar x$ together with some $\bar\lambda\in \mathbb R$ satisfies
\[
\nabla \phi (\bar x) - \bar\lambda \bar x= 0,\quad \bar x\in \mathbb B
\]
and
\[
d^T \Big ( \nabla^2 \phi (\bar x) - \bar\lambda I \Big ) d > 0,\quad \forall d\in \bar x^\perp,\; d\neq 0.
\]

\begin{proposition}\label{prop:second}
The second order sufficient condition stated in Assumption (A) is equivalent to the following conditions:
\begin{description}
\item [\rm{(i)}] The point $\bar x$ is a solution of the constrained equations (\ref{equ-equa}) and $\bar \lambda ={\cal A}\bar x^m$
is the corresponding Lagrangian multiplier.
\item [\rm{(ii)}] The matrix $U_{\bar x}^TF'(\bar x) U_{\bar x}$  is positive definite.
\end{description}
\end{proposition}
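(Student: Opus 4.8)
The plan is to show the two conditions (i)–(ii) together are equivalent to the second order sufficient condition (SOSC) stated in Assumption (A), by unwinding the definitions and exploiting the identity $F(x)=(I-xx^T)\nabla\phi(x)$ together with the homogeneity of $\phi$. First I would record the elementary algebraic facts we will use repeatedly: since $\phi(x)=\frac1m\mathcal Ax^m$, we have $\nabla\phi(x)=\mathcal Ax^{m-1}$, $\nabla^2\phi(x)=(m-1)\mathcal Ax^{m-2}$, and by Euler's identity $x^T\nabla\phi(x)=\mathcal Ax^m=m\phi(x)$ and $\nabla^2\phi(x)\,x=(m-1)\mathcal Ax^{m-1}=(m-1)\nabla\phi(x)$. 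In particular, at $\bar x\in\mathbb B$ the stationarity equation $\nabla\phi(\bar x)-\bar\lambda\bar x=0$ forces $\bar\lambda=\bar x^T\nabla\phi(\bar x)=\mathcal A\bar x^m$, so that the Lagrange multiplier is automatically $\bar\lambda=\mathcal A\bar x^m$, and the stationarity equation is exactly $F(\bar x)=(I-\bar x\bar x^T)\nabla\phi(\bar x)=\nabla\phi(\bar x)-\bar\lambda\bar x=0$. This already gives the equivalence of the first line of Assumption (A) with condition (i).

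Next I would establish the equivalence of the Hessian condition on $\bar x^\perp$ with condition (ii). The reduced Hessian of the Lagrangian is the $(n-1)\times(n-1)$ matrix $U_{\bar x}^T(\nabla^2\phi(\bar x)-\bar\lambda I)U_{\bar x}$, and the SOSC says precisely that this matrix is positive definite (writing $d=U_{\bar x}u$ for $u\in\mathbb R^{n-1}$ ranges over all nonzero vectors of $\bar x^\perp$). So it remains to show
\[
U_{\bar x}^T\bigl(\nabla^2\phi(\bar x)-\bar\lambda I\bigr)U_{\bar x}=U_{\bar x}^TF'(\bar x)U_{\bar x}.
\]
Using the formula $F'(x)=(m-1)\mathcal Ax^{m-2}-\mathcal Ax^m\cdot I-mx(\mathcal Ax^{m-1})^T=\nabla^2\phi(x)-\bar\lambda I-mx\,\nabla\phi(x)^T$ at $x=\bar x$ (here $\mathcal A\bar x^m=\bar\lambda$), the two sides differ only by the rank-one term $-m\,\bar x\,\nabla\phi(\bar x)^T$; but $U_{\bar x}^T\bar x=0$ since the columns of $U_{\bar x}$ span $\bar x^\perp$, so this term is annihilated on both sides and the identity holds. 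Combining this with the first paragraph yields that (i) and (ii) together are exactly Assumption (A), in both directions.

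The only mild subtlety — and the place I would be most careful — is the bookkeeping around which multiplier is "the" Lagrange multiplier and making sure the reduced-Hessian characterization of SOSC is being applied with the correct sign convention for a minimization problem; one must check that $\mathcal A\bar x^m$ is forced, not assumed, and that the projector identities $U_{\bar x}U_{\bar x}^T=I-\bar x\bar x^T$ and $U_{\bar x}^T U_{\bar x}=I_{n-1}$ (orthonormal columns) are used consistently. None of this is deep: once the formulas for $\nabla\phi$, $\nabla^2\phi$, $F$, and $F'$ are written down together with Euler's identity, the equivalence is a two-line linear-algebra computation. I do not anticipate any real obstacle; the "hard part," such as it is, is simply organizing the chain of equivalences cleanly so that each of (i) and (ii) is matched to the corresponding half of Assumption (A).
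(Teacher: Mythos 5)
Your proposal is correct and follows essentially the same route as the paper: identify the Lagrangian stationarity condition with $F(\bar x)=0$ and $\bar\lambda=\mathcal A\bar x^m$, then match the reduced Hessian $U_{\bar x}^T(\nabla^2\phi(\bar x)-\bar\lambda I)U_{\bar x}$ with $U_{\bar x}^TF'(\bar x)U_{\bar x}$. The only difference is that you explicitly verify the step the paper dismisses as ``not difficult to see,'' namely that the rank-one term $-m\bar x(\mathcal A\bar x^{m-1})^T$ in $F'(\bar x)$ is killed by $U_{\bar x}^T\bar x=0$, which is a worthwhile addition.
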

\begin{proof}
The Lagrangian function of the problem (\ref{opt}) is denoted by
\[
L(x,\lambda)=\phi (x)- \frac 12 \lambda \Big (x^Tx-1\Big )=\frac 1m {\cal A}x^m-\frac 12 \lambda \Big (x^Tx-1\Big ).
\]
Since $\bar x$ is a KKT point of problem (\ref{opt}), there is a $\bar \lambda $ such that
\[
\nabla _xL(\bar x,\bar\lambda)=\nabla \phi (\bar x) - \bar \lambda x=0 \quad\mbox{and}\quad \bar{x}^T\bar{x}=1,
\]
which implies the condition (i). Besides, the assumption that $\bar x$ satisfies the second order sufficient condition
means that
\[
d^T\nabla _x^2L(\bar x,\bar\lambda)d=d^T \Big ( \nabla ^2\phi (\bar x)-\bar\lambda I\Big )d>0,\quad \forall d: \; \bar x^Td=0, d\neq0.
\]
It is not difficult to see that it is equivalent to condition (ii).
\end{proof}

The last proposition ensures the existence and the uniqueness of the Newton direction $d_k$ when $x_k$
is near $\bar x$. The theorem below establishes the local quadratic convergence of the feasible local Newton method.
\begin{theorem}\label{th:quad-local-N}
Suppose that Assumption (A) holds. If $x_0$ is sufficiently close to a KKT point $\bar x$ of problem (\ref{opt}),
then, the sequence of iterates $\{x_k\}$ generated by the feasible local Newton method converges to $\bar x$ quadratically.
\end{theorem}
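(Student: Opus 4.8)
The plan is to run the classical affine-invariant Newton convergence argument, adapted to the sphere: first derive a one-step estimate $\|x_{k+1}-\bar x\|\le c\,\|x_k-\bar x\|^2$ valid for $x_k$ in a neighbourhood $N$ of $\bar x$, and then close the induction in the standard way by choosing $x_0\in N$ with $c\,\|x_0-\bar x\|<1$, which forces $\{x_k\}\subset N$, $x_k\to\bar x$, and the quadratic rate. To make the Newton direction well defined throughout $N$, I would first fix a smooth local orthonormal frame $x\mapsto U_x\in\mathbb R^{n\times(n-1)}$ for $x^\perp$ near $\bar x$ (e.g. by applying Gram--Schmidt to the columns of $(I-xx^T)U_{\bar x}$, which stay linearly independent for $\|x-\bar x\|$ small), so that $U_xU_x^T=I-xx^T$, $U_x^TU_x=I_{n-1}$ and $U_x^Tx=0$; and note that the direction $d_k$ defined by (\ref{sub:local-N-a}) is independent of this choice.

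The first step is uniform invertibility of the reduced Jacobian. Since $U_x^Tx=0$, the rank-one term $-mx(\mathcal A x^{m-1})^T$ of $F'(x)$ drops out under compression, leaving
\[
U_x^T F'(x)U_x=(m-1)\,U_x^T(\mathcal A x^{m-2})U_x-(\mathcal A x^m)\,I_{n-1},
\]
which is symmetric and, by Proposition \ref{prop:second}(ii), positive definite at $x=\bar x$ with smallest eigenvalue, say, $\mu>0$. Continuity of $x\mapsto U_x^TF'(x)U_x$ keeps its smallest eigenvalue $\ge\mu/2$ on some neighbourhood $N$ of $\bar x$, so $\|(U_x^TF'(x)U_x)^{-1}\|\le 2/\mu$ on $N$ and the Newton direction of (\ref{sub:local-N}) exists and is unique whenever $x_k\in N$.

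The second step is the one-step estimate. Put $e_k=x_k-\bar x$. Because $F$ is a polynomial map with $F(\bar x)=0$, Taylor's theorem gives $F(x_k)=F'(x_k)e_k+O(\|e_k\|^2)$; because $x_k,\bar x\in\mathbb B$ one has $x_k^Te_k=\frac12\|e_k\|^2$, hence $e_k=(I-x_kx_k^T)e_k+(x_k^Te_k)x_k=U_{x_k}U_{x_k}^Te_k+O(\|e_k\|^2)$. Multiplying by $U_{x_k}^T$ and combining,
\[
U_{x_k}^TF(x_k)=\big(U_{x_k}^TF'(x_k)U_{x_k}\big)\big(U_{x_k}^Te_k\big)+O(\|e_k\|^2).
\]
Solving (\ref{sub:local-N-a}) and using the bound on the inverse gives $u_k=-U_{x_k}^Te_k+O(\|e_k\|^2)$, whence $d_k=U_{x_k}u_k=-U_{x_k}U_{x_k}^Te_k+O(\|e_k\|^2)=-e_k+O(\|e_k\|^2)$, and therefore $\|x_k+d_k-\bar x\|=\|e_k+d_k\|=O(\|e_k\|^2)$.

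The last step is the normalization. Since $\|\bar x\|=1$, $\big|\,\|x_k+d_k\|-1\,\big|\le\|x_k+d_k-\bar x\|$, so after shrinking $N$ the vector $x_k+d_k$ is bounded away from $0$ and
\[
\|x_{k+1}-\bar x\|\le\Big\|\frac{x_k+d_k}{\|x_k+d_k\|}-(x_k+d_k)\Big\|+\|x_k+d_k-\bar x\|=\big|1-\|x_k+d_k\|\big|+\|x_k+d_k-\bar x\|\le 2\,\|x_k+d_k-\bar x\|,
\]
which is $O(\|e_k\|^2)$; this is the desired estimate $\|x_{k+1}-\bar x\|\le c\,\|x_k-\bar x\|^2$, and the theorem follows. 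I expect the main obstacle to be the clean treatment of the $x$-dependent frame $U_x$: its non-uniqueness must be shown harmless (the intrinsic object is $(I-xx^T)F'(x)$ restricted to $x^\perp$, whose spectrum and the norm of whose inverse do not depend on the choice of $U_x$), and a continuous local selection is needed to propagate the positive definiteness at $\bar x$ to a neighbourhood. Once this is in place, the remainder is the routine affine-invariant Newton analysis together with the elementary perturbation bound for the retraction $y\mapsto y/\|y\|$.
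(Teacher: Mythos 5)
Your proof is correct and takes essentially the same route as the paper's: both establish uniform nonsingularity of the linearization near $\bar x$ (the paper via the stacked matrix $P(x)$ combining $U_x^TF'(x)$ with $x^T$, you via the reduced matrix $U_x^TF'(x)U_x$), both use the Taylor expansion of $F$ at $x_k$ together with the sphere identity $x_k^T(x_k-\bar x)=\frac 12\|x_k-\bar x\|^2$ to obtain $\|x_k+d_k-\bar x\|=O(\|x_k-\bar x\|^2)$, and both then note that the projection onto $\mathbb B$ perturbs the iterate by only $O(\|d_k\|^2)$. Your explicit construction of a continuous local frame $U_x$ supplies a detail the paper leaves implicit, but the substance of the argument is the same.
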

\begin{proof}
Denote $P (x)= \left (\begin{array}{c} U_{x}^T F'(x) \\ x^T \end{array} \right )$. By the positive definiteness of
$U_{\bar x}^TF'(\bar x) U_{\bar x}$, it is not difficult to show that the matrix $P(\bar x)$ is nonsingular.
 As a result, the matrix $P(x)$ is uniformly nonsingular in some neighbourhood of $\bar x$.
 It then follows from (\ref{sub:local-N}) that $\|d_k\|=O(\|x_k-\bar x\|)$.

Now, we verify  the relation
\begin{equation}\label{temp:local}
\|x_{k+1}-\bar x\|=O(\|x_k-\bar x\|^2)
\end{equation}
by induction. Suppose that $x_k$ is sufficiently close to $\bar x$ such that the matrix
$U_{x_k}^TF'(x_k)U_{x_k}$ is positive definite. It follows from the first equality of (\ref{sub:local-N}) and $F(\bar{x})=0$ that
\begin{eqnarray*}
U_{x_k}^T F'(x_k)(x_k+d_k-\bar x) &=& -U_{x_k} ^T \Big ( F(x_k)-F(\bar x)-F'(x_k)(x_k-\bar x)\Big ) \\ &=&O(\|x_k-\bar x\|^2).
\end{eqnarray*}
Since $x_k,\bar x\in \mathbb B$, we get from the second equality of (\ref{sub:local-N}) that
\[
x_k ^T(x_k+d_k-\bar x)= x_k^T(x_k-\bar x)=\frac 12 \|x_k-\bar x\|^2 =O(\|x_k-\bar x\|^2).
\]
Therefore, we obtain that
\[
P(x_k) (x_k+d_k-\bar x)= O(\|x_k-\bar x\|^2),
\]
which yields the fact that
\begin{equation} \label{temp:local-a}
\|x_k+d_k-\bar x\|=O(\|x_k-\bar x\|^2)
\end{equation}
as long as $x_k$ is sufficiently close to $\bar x$.

On the other hand, similar to the derivation of (\ref{iter2}), it is easy to derive that
\begin{eqnarray*}
x_{k+1}-\bar x &=&x_k+(I-x_kx_k^T) d_k -\bar x + O(\|d_k\|^2)\\
&=&x_k+d_k-\bar x+O(\|d_k\|^2)=O(\|x_k-\bar x\|^2),
\end{eqnarray*}
which together with (\ref{temp:local-a}) implies (\ref{temp:local}). By the principle of induction,
we claim that $\{x_k\}$ converges to $\bar x$ quadratically.
\end{proof}

The remainder of this subsection is devoted to the globalization of the feasible local Newton method.

\begin{proposition}\label{prop:desc-N}
Suppose that $x_k\in \mathbb B$ is not a KKT point of (\ref{opt}) at which the second order sufficient
condition holds, i.e. $U_{x_k}^TF'(x_k)U_{x_k}$ is positive definite in $\mathbb R^{n-1}$. Then, the Newton direction determined by (\ref{sub:local-N}) is a curve descent direction of $\phi$ at $x_k$.
\end{proposition}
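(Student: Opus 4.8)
The plan is to reduce the claim to the first‑order criterion for a curve descent direction that was recorded right after (\ref{mean}): a vector $d$ is a curve descent direction of $\phi$ at a point $x\in\mathbb B$ whenever $[\nabla\phi(x)]^T(I-xx^T)d<0$. Hence it suffices to establish this strict inequality for $x=x_k$ and $d=d_k$, the Newton direction characterized by (\ref{sub:local-N}), equivalently by (\ref{sub:local-N-a}). Note first that, under the hypothesis, $U_{x_k}^TF'(x_k)U_{x_k}$ is positive definite, so the Newton direction exists, is unique, and is given explicitly by $d_k=U_{x_k}u_k$ with $u_k=-\bigl(U_{x_k}^TF'(x_k)U_{x_k}\bigr)^{-1}U_{x_k}^TF(x_k)$; since $d_k\in\mathrm{range}(U_{x_k})=x_k^\perp$, the second equation $x_k^Td_k=0$ in (\ref{sub:local-N}) holds automatically and needs no separate check.

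Next I would rewrite the quantity to be estimated in terms of $F$. Because $I-x_kx_k^T$ is symmetric and $F(x_k)=(I-x_kx_k^T)\nabla\phi(x_k)$, we have
\[
[\nabla\phi(x_k)]^T(I-x_kx_k^T)d_k=[F(x_k)]^Td_k .
\]
Substituting the explicit form of $d_k$ and setting $v_k:=U_{x_k}^TF(x_k)\in\mathbb R^{n-1}$, this becomes
\[
[F(x_k)]^Td_k=v_k^Tu_k=-\,v_k^T\bigl(U_{x_k}^TF'(x_k)U_{x_k}\bigr)^{-1}v_k .
\]
Since $U_{x_k}^TF'(x_k)U_{x_k}$ is (symmetric) positive definite, so is its inverse, and therefore $v_k^T\bigl(U_{x_k}^TF'(x_k)U_{x_k}\bigr)^{-1}v_k\ge 0$, with equality if and only if $v_k=0$.

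The only step that requires a small argument — and which I regard as the "main obstacle," though it is a mild one — is ruling out $v_k=0$; this is exactly where the assumption that $x_k$ is not a KKT point of (\ref{opt}) enters. The columns of $U_{x_k}$ span $x_k^\perp$, and $F(x_k)\in x_k^\perp$ because $[F(x)]^Tx=0$ for every $x\in\mathbb B$; hence $v_k=U_{x_k}^TF(x_k)=0$ would force $F(x_k)=0$, i.e. $x_k$ would be a $Z$-eigenvector of $\cal A$ and thus a KKT point of (\ref{opt}), contradicting the hypothesis. Therefore $v_k\neq 0$, so
\[
[\nabla\phi(x_k)]^T(I-x_kx_k^T)d_k=-\,v_k^T\bigl(U_{x_k}^TF'(x_k)U_{x_k}\bigr)^{-1}v_k<0 ,
\]
and by the criterion quoted at the outset, $d_k$ is a curve descent direction of $\phi$ at $x_k$. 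Apart from this non‑degeneracy bookkeeping, the argument is entirely routine, the substantive ingredient being the identity $F(x)=(I-xx^T)\nabla\phi(x)$ and the positive definiteness assumed in the statement.
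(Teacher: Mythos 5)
Your proposal is correct and follows essentially the same route as the paper's own proof: both reduce the claim to the first-order criterion $[\nabla\phi(x_k)]^T(I-x_kx_k^T)d_k<0$, compute this quantity as $-\,[F(x_k)]^TU_{x_k}\bigl(U_{x_k}^TF'(x_k)U_{x_k}\bigr)^{-1}U_{x_k}^TF(x_k)\le 0$, and rule out equality by observing that $U_{x_k}^TF(x_k)=0$ would force $F(x_k)=0$ since $F(x_k)\in x_k^\perp$. Your explicit remark that $U_{x_k}^TF'(x_k)U_{x_k}$ is symmetric (so its inverse is positive definite) is a small point the paper leaves implicit, but otherwise the arguments coincide.
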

\begin{proof}
We prove the proposition by verifying that the Newton direction $d_k$ satisfies
\[
[\nabla \phi (x_k)]^T (I-x_kx_k^T)d_k<0.
\]
Indeed, it follows from (\ref{sub:local-N-a}) that
\begin{eqnarray*}
[\nabla \phi (x_k)]^T  (I-x_kx_k^T)d_k &=& [F(x_k)]^T d_k = [F(x_k)]^TU_{x_k}p_k\\
    &=& -[F(x_k)]^TU_{x_k} \Big ( U_{x_k} ^TF'(x_k)U_{x_k}\Big )^{-1} U_{x_k}^T F(x_k)\le 0.
\end{eqnarray*}
The last inequality reduces to equality if and only if $U_{x_k}^T  F(x_k)=0$,
which is equivalent to the fact that $(I-x_kx_k)^TF(x_k)=F(x_k)=0$, that is, $x_k$ is a KKT point of (\ref{opt}).
\end{proof}

Proposition \ref{prop:desc-N} provides a way to globalize the feasible local Newton method. Specifically, we replace the direction $d_k$ in Algorithm \ref{alg1} with the Newton direction determined by (\ref{sub:local-N}). Taking into account that when $x_k$ is far away from the KKT point $\bar x$, the second order sufficient condition may not hold, which leads to the non-existence of the Newton direction or non-descent possibility of the Newton direction. In this case, we use the gradient direction $d_k=-F(x_k)=-(I-x_kx_k^T) f(x_k)$ instead of the Newton direction.

The steps of the global Newton method are presented in Algorithm \ref{alg2}.

\begin{algorithm}[!htbp]
\caption{(A global Newton method for problem (\ref{opt})).}\label{alg2}
\begin{algorithmic}[1]
\STATE Given a constant $\sigma\in (0,\frac{1}{2})$ and an initial point $x^0\in \mathbb{B}$.
Let $k:=0$ and $\lambda_0={\cal A}x_0^m$.
\WHILE{$F(x^k)\neq 0$}
\STATE Compute the Newton direction $d_k$ by the local Newton method given by (\ref{sub:local-N}).
\STATE If $d_k$ does not exist or is not a curve descent direction, let $d_k=-(I-x_kx_k^T)f(x_k)$,
or $d_k$ be the solution of the following linear equations system
\begin{equation}\label{dir:LM}
\Big ([F'(x_k)] ^2 +\mu \|F(x_k)\| I\Big  )d _k +F'(x_k)F(x_k)=0,
\end{equation}
where $\mu>0$ is an arbitrary constant.
\STATE Let
$$
x_k(\alpha)=\frac{x_k+\alpha d_k}{\|x_k+\alpha d_k\|}.
$$
Find $\alpha_k\in(0,1]$ satisfying the following inequality by a backtracking process
\begin{equation}\label{armijo2}
\phi(x_k(\alpha_k))\le\phi(x_k)+\sigma\alpha_k [F(x_k)]^Td_k.
\end{equation}
\STATE Let $x_{k+1}:=x_k(\alpha_k)$ and $\lambda_{k+1}={\cal A}x_{k+1}^m$. Let $k:=k+1$.
\ENDWHILE
\end{algorithmic}
\end{algorithm}

The global convergence of Algorithm \ref{alg2} is stated as follows.
\begin{theorem}\label{converg2}
The sequence of iterates $\{x_k\}$ generated by  Algorithm \ref{alg2} is bounded and the sequence
of function evaluations $\{\phi (x_k)\}$ is decreasing. Suppose that there is an accumulation point
$\bar x$ of $\{x_k\}$ at which the second order sufficient condition holds. Then, $\bar x$ is a $Z$-eigenvector of $\cal A$ corresponding to the eigenvalue $\bar \lambda={\cal A}\bar x^m$. Moreover,
every accumulation point $\tilde x$ of $\{x_k\}$ is a $Z$-eigenvector of ${\cal A}$ corresponding to
the eigenvalue $\tilde{\lambda}={\cal A}\tilde x^m$.
\end{theorem}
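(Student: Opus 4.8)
\textbf{Proof proposal for Theorem \ref{converg2}.}

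The plan is to follow the standard template for global convergence of a line-search method, adapted to the spherical geometry encoded in the projection map $x(\alpha)$. First I would establish boundedness and monotonicity: since every iterate lies on $\mathbb B$, the sequence $\{x_k\}$ is trivially bounded, and the Armijo condition (\ref{armijo2}) together with Proposition \ref{prop:desc-N} (for Newton steps) and the obvious inequality $[F(x_k)]^T d_k = -\|F(x_k)\|^2 < 0$ (for gradient steps, noting $(I-x_kx_k^T)^2 = I-x_kx_k^T$) — or the analogous sign computation for the regularized direction (\ref{dir:LM}), where $[F(x_k)]^T d_k = -[F(x_k)]^T([F'(x_k)]^2+\mu\|F(x_k)\|I)^{-1}F'(x_k)F(x_k)$, which I would need to argue is negative when $x_k$ is not a KKT point — shows $\{\phi(x_k)\}$ is nonincreasing. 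Because $\phi$ is continuous on the compact set $\mathbb B$, it is bounded below, so $\{\phi(x_k)\}$ converges and in particular $\phi(x_k)-\phi(x_{k+1})\to 0$.

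Next I would show that any accumulation point $\tilde x$ is a KKT point, i.e.\ $F(\tilde x)=0$. Let $\{x_k\}_{k\in K}\to \tilde x$ along a subsequence. The key identity is (\ref{mean}), which gives $\phi(x_k(\alpha)) = \phi(x_k) + \alpha[\nabla\phi(x_k)]^T(I-x_kx_k^T)d_k + O(\|\alpha d_k\|^2)$, and I would combine this with the summability $\sum_k \alpha_k (-[F(x_k)]^Td_k) < \infty$ coming from the telescoping of (\ref{armijo2}). The argument splits according to whether, infinitely often along $K$, the Newton direction is accepted or the fallback direction is used. For the fallback (gradient or Levenberg--Marquardt) direction, $\|d_k\|$ is bounded in terms of $\|F(x_k)\|$ and $-[F(x_k)]^Td_k \ge c\|F(x_k)\|^2$ for some uniform $c>0$ on a neighbourhood of $\tilde x$ (here I use that $F'$ is continuous, hence $\|F'(x)\|$ is bounded near $\tilde x$); a standard contradiction argument — if $F(\tilde x)\neq 0$ then the backtracking line search would accept a steplength bounded away from zero, forcing $\phi(x_k)-\phi(x_{k+1})$ to stay bounded away from zero — yields $F(\tilde x)=0$. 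For the Newton direction, $\|d_k\|=O(\|F(x_k)\|)$ in a neighbourhood where $U_{x}^TF'(x)U_{x}$ is uniformly positive definite, and $-[F(x_k)]^Td_k \ge c\|F(x_k)\|^2$ by the computation in Proposition \ref{prop:desc-N}, so the same contradiction argument applies.

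Then the statement about an accumulation point $\bar x$ at which the second order sufficient condition holds follows immediately from the previous paragraph, since at such $\bar x$ one has $F(\bar x)=0$ and $\bar\lambda = {\cal A}\bar x^m$ by Proposition \ref{prop:second}, so $\bar x$ is a $Z$-eigenvector. The final, stronger claim — that \emph{every} accumulation point is a $Z$-eigenvector, even ones where the second order condition may fail — also follows directly, since the argument in the second paragraph never used positive definiteness globally: whenever the Newton direction fails to exist or to be a descent direction at $x_k$, Algorithm \ref{alg2} switches to the fallback direction, and the fallback branch of the argument already establishes $F(\tilde x)=0$ without any second order hypothesis. I expect the main obstacle to be the bookkeeping of the two (or three) possible direction choices uniformly along a subsequence: one must argue that near a fixed accumulation point the relevant lower bounds $-[F(x_k)]^Td_k \ge c\|F(x_k)\|^2$ and upper bounds $\|d_k\|\le C\|F(x_k)\|$ hold with \emph{constants independent of which branch is taken and independent of $k$}, which requires a short case analysis plus continuity of $F'$ (and, for the Newton branch, the uniform nonsingularity of $U_x^TF'(x)U_x$ near $\bar x$), but is otherwise routine. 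I would also need to be slightly careful that the $O(\|\alpha d_k\|^2)$ remainder in (\ref{mean}) is uniform for $x_k$ near $\tilde x$ and $d_k$ bounded, which again follows from the smoothness of $\phi$ and the already-established bound on $\|d_k\|$.
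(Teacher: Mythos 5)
Your treatment of the first three claims is sound and follows essentially the same route as the paper: boundedness is immediate from $x_k\in\mathbb{B}$, monotonicity of $\{\phi(x_k)\}$ comes from the Armijo condition (\ref{armijo2}) together with the sign of $[F(x_k)]^Td_k$ on each branch, and the accumulation point $\bar x$ at which the second order sufficient condition holds is handled by the classical two-case line-search argument ($\alpha_k$ bounded away from zero versus $\alpha_k\to 0$, where the rejected trial step $\rho^{-1}\alpha_k$ yields $[F(\bar x)]^T\bar d\ge \sigma[F(\bar x)]^T\bar d$ in the limit and hence $F(\bar x)=0$). The paper does exactly this along the subsequence; your version, phrased through the uniform bounds $-[F(x_k)]^Td_k\ge c\|F(x_k)\|^2$ and $\|d_k\|\le C\|F(x_k)\|$ near $\bar x$, is if anything more explicit about why the contradiction works, and those bounds are legitimately available there because $U_x^TF'(x)U_x$ is uniformly positive definite (hence uniformly bounded and boundedly invertible) in a neighbourhood of $\bar x$.

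The gap is in the final claim, that \emph{every} accumulation point $\tilde x$ is a $Z$-eigenvector. You argue that wherever the second order condition fails the algorithm must be in the fallback (gradient or Levenberg--Marquardt) branch, for which your uniform descent bounds need no second order hypothesis. But that is not what Algorithm \ref{alg2} does: it switches to the fallback direction only when the Newton direction \emph{fails to exist or fails to be a curve descent direction}. Near an accumulation point $\tilde x$ where $U_{\tilde x}^TF'(\tilde x)U_{\tilde x}$ is singular or indefinite, the Newton system can still be solvable with $[F(x_k)]^Td_k<0$, so the Newton branch is taken, yet $\|d_k\|$ may blow up and $-[F(x_k)]^Td_k$ may be arbitrarily small relative to $\|F(x_k)\|^2$; the constants $c$ and $C$ on which your contradiction argument rests are then not available. (For what it is worth, the paper's own justification of this last claim --- that monotonicity and boundedness of $\{\phi(x_k)\}$ make every accumulation point a minimizer of (\ref{opt}) --- is also not a proof; all that follows is that $\phi$ takes a common value at all accumulation points. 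Closing this properly would require either a uniform sufficient-descent safeguard in step 4 of the algorithm, or restricting the conclusion to accumulation points at which the second order condition holds.)
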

\begin{proof}
From step 5 of Algorithm \ref{alg2} and Proposition \ref{prop:desc-N}, it is easy to see that $\{x_k\}$ is bounded and $\{\phi (x_k)\}$ is decreasing. Now we show $\bar{x}$ is an
eigenvector of $\cal A$ corresponding to the eigenvalue $\bar \lambda$.

Since $\bar{x}$ is an accumulation point, there exists an integer set $K$ such that $\lim_{k\rightarrow\infty,k\in K}x_k=\bar{x}$. When $k\in K$ is sufficiently large, Algorithm \ref{alg2} reduces to the local Newton method mentioned above. Let $\lim_{k\rightarrow\infty,k\in K}d_k=\bar{d}$, then, from the fact that $\{x_k\}$ is bounded and $\{\phi (x_k)\}$ is decreasing we obtain that
$$
\lim_{k\rightarrow\infty}\alpha_k[F(x_k)]^Td_k=0.
$$
\begin{itemize}
  \item Suppose that $\alpha_k\neq0$, then,
  $$
  \lim_{k\rightarrow\infty,k\in K}[F(x_k)]^Td_k=[F(\bar{x})]^T\bar{d}=0.
  $$
  \item Suppose that $\alpha_k=0$, then,
  \begin{equation}\label{ct}
  \phi(x_k(\rho^{-1}\alpha_k))-\phi(x_k)>\sigma\rho^{-1}\alpha_k[F(x_k)]^Td_k,
  \end{equation}
  where $\rho$ is the constant used in backtracking process. Divide both sides of the inequality (\ref{ct}) by $\rho^{-1}\alpha_k$ and take the limit of $k\in K$, then, by using the mean-value theorem and the fact that $[\nabla\phi(x_k)]^T(I-x_kx_k^T)d_k=[F(x_k)]^Td_k$ we obtain that
  $$
  [F(\bar{x})]^T\bar{d}\geq\sigma[F(\bar{x})]^T\bar{d}.
  $$
  Considering $[F(x_k)]^Td_k\leq0$ and $\sigma\in(0,\frac{1}{2})$, we have that $[F(\bar{x})]^T\bar{d}=0$.
\end{itemize}
Combining the above two aspects together we can obtain $[F(\bar{x})]^T\bar{d}=0$. Then, same as the Proposition \ref{prop:desc-N}, $[F(\bar{x})]^T\bar{d}=0$ is equivalent to the fact that $F(\bar{x})=0$, which means that $\bar{x}$ is a $Z$-eigenvector of ${\cal A}$ with $\bar{\lambda}$ being the corresponding $Z$-eigenvalue.

Besides, since $\{\phi(x_k)\}$ is monotone nonincreasing and bounded, it is easy to see that any accumulation point is the minimizer of the problem (\ref{opt}). Therefore, any accumulation point $\tilde{x}$ of $\{x_k\}$ is a $Z$-eigenvector of ${\cal A}$ with $\tilde{\lambda}$ being the corresponding eigenvalue.
\end{proof}

We conclude this section by establishing the quadratic convergence of Algorithm \ref{alg2}.

\begin{theorem}\label{th:quad-N}
Suppose that the sequence $\{x_k\}$ generated by Algorithm \ref{alg2} converges to $\bar x$ at
which the second order sufficient condition holds. If the positive constant $\sigma $ in the algorithm
is chose to satisfy $\sigma <\frac 12$, then, the convergence rate of $\{x_k\}$ is quadratic. Moreover,
 after finitely many iterations, the steplength $\alpha_k=1$ is always accepted.
\end{theorem}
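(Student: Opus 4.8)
The plan is to establish the two claims in reverse logical order: first that the unit steplength $\alpha_k=1$ passes the Armijo test (\ref{armijo2}) for all sufficiently large $k$, and then that from that index on Algorithm~\ref{alg2} coincides with the feasible local Newton method, so that the quadratic rate follows from the proof of Theorem~\ref{th:quad-local-N}. Since $x_k\to\bar x$ and the second order sufficient condition holds at $\bar x$, Proposition~\ref{prop:second} together with continuity shows that, for every $x$ near $\bar x$, the matrix $U_x^TF'(x)U_x$ is symmetric (its nonsymmetric part $-m\,x({\cal A}x^{m-1})^T$ is annihilated on the left by $U_x^T$ and on the right by $U_x$, since $U_x^Tx=0$, leaving $U_x^T(\nabla^2\phi(x)-\lambda I)U_x$ with $\lambda={\cal A}x^m$) and positive definite. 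Hence for $k$ large the Newton direction $d_k$ of (\ref{sub:local-N})--(\ref{sub:local-N-a}) exists and, by Proposition~\ref{prop:desc-N}, is a curve descent direction, so Algorithm~\ref{alg2} never enters its Step~4 and uses $d_k$; moreover, exactly as in the proof of Theorem~\ref{th:quad-local-N}, $\|d_k\|=O(\|x_k-\bar x\|)\to 0$.

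The core of the argument is a sharpened second order expansion of $\phi(x_k(1))$, in which the $O(\|\alpha d\|^2)$ terms discarded in (\ref{mean}) are retained. Because the Newton direction satisfies $x_k^Td_k=0$, one has $\|x_k+d_k\|^2=1+\|d_k\|^2$, hence $x_k(1)-x_k=d_k-\frac12\|d_k\|^2x_k+O(\|d_k\|^3)$, a refinement of (\ref{iter2}). Plugging this into the Taylor expansion of the polynomial $\phi$ about $x_k$ and using $\nabla\phi(x_k)^T(I-x_kx_k^T)d_k=F(x_k)^Td_k$, $\nabla\phi(x_k)^Tx_k={\cal A}x_k^m=\lambda_k$, and the key identity $F(x_k)^Td_k=-d_k^T\bigl(\nabla^2\phi(x_k)-\lambda_kI\bigr)d_k$ --- obtained by writing $d_k=U_{x_k}u_k$ in the Newton equation $U_{x_k}^TF'(x_k)U_{x_k}u_k=-U_{x_k}^TF(x_k)$ and invoking $U_{x_k}^TF'(x_k)U_{x_k}=U_{x_k}^T(\nabla^2\phi(x_k)-\lambda_kI)U_{x_k}$ --- the first order term, the curvature term $\frac12 d_k^T\nabla^2\phi(x_k)d_k$, and the projection correction $-\frac12\lambda_k\|d_k\|^2$ combine to yield $\phi(x_k(1))-\phi(x_k)=\frac12 F(x_k)^Td_k+O(\|d_k\|^3)$.

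With this expansion, the test (\ref{armijo2}) at $\alpha_k=1$ becomes $\frac12 F(x_k)^Td_k+O(\|d_k\|^3)\le\sigma F(x_k)^Td_k$, i.e. $(\frac12-\sigma)\bigl(-F(x_k)^Td_k\bigr)\ge O(\|d_k\|^3)$. Now $-F(x_k)^Td_k=u_k^T\bigl(U_{x_k}^T(\nabla^2\phi(x_k)-\lambda_kI)U_{x_k}\bigr)u_k\ge c\|u_k\|^2=c\|d_k\|^2$ for some $c>0$ and all large $k$, by continuity and positive definiteness near $\bar x$ and the fact that $\|d_k\|=\|u_k\|$. Since $\sigma<\frac12$, the left-hand side is at least $(\frac12-\sigma)c\|d_k\|^2$, which dominates the $O(\|d_k\|^3)$ remainder once $\|d_k\|$ is small; as $\|d_k\|\to0$ there is an index $k_0$ beyond which $\alpha_k=1$ is accepted. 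For $k\ge k_0$ the iteration is $x_{k+1}=(x_k+d_k)/\|x_k+d_k\|$, i.e. the feasible local Newton step (\ref{iter-Newton}); therefore the estimate $\|x_{k+1}-\bar x\|=O(\|x_k-\bar x\|^2)$ derived in the proof of Theorem~\ref{th:quad-local-N} holds for all $k\ge k_0$, and since the tail governs the convergence order, $\{x_k\}\to\bar x$ quadratically.

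The main obstacle is the sharp expansion $\phi(x_k(1))-\phi(x_k)=\frac12 F(x_k)^Td_k+O(\|d_k\|^3)$: one must keep the second order terms dropped in (\ref{mean}), correctly track the curvature contribution together with the correction $-\frac12\|d_k\|^2\nabla\phi(x_k)^Tx_k$ coming from projecting $x_k+d_k$ back onto $\mathbb B$, and use the Newton equation together with the symmetry of $U_{x_k}^TF'(x_k)U_{x_k}$ to see that the Newton direction makes the curvature exactly halve the first order predicted decrease --- this is the mechanism that makes full Newton steps compatible with an Armijo constant $\sigma<\frac12$. The remaining ingredients --- $\|d_k\|=O(\|x_k-\bar x\|)\to0$, the lower bound $-F(x_k)^Td_k\ge c\|d_k\|^2$, and the quadratic estimate for full Newton steps --- are either already established in earlier results or are routine continuity arguments.
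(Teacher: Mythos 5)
Your proposal is correct and follows essentially the same route as the paper's own proof: a second-order expansion of $\phi(x_k(1))$ that combines the projection correction $-\tfrac12\lambda_k\|d_k\|^2$, the curvature term, and the Newton identity $F(x_k)^Td_k=-d_k^TF'(x_k)d_k$ to show the actual decrease is $\tfrac12 F(x_k)^Td_k$ up to higher order, so the Armijo test with $\sigma<\tfrac12$ accepts $\alpha_k=1$ and quadratic convergence follows from Theorem~\ref{th:quad-local-N}. Your added care in verifying that the Newton direction exists and is used for large $k$, and the exact identity $\|x_k+d_k\|^2=1+\|d_k\|^2$, are minor refinements of the same argument.
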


\begin{proof}
We only need to show that when $k$ is sufficiently large, $\alpha_k=1$ always satisfies the line search condition (\ref{armijo2}).

By the definition of $x_k(\alpha)$, we can easily get
\begin{eqnarray*}
x_k(1)-x_k &=& \frac {x_k+d_k}{\|x_k+d_k\|} - x_k= d_k + \Big (\frac {1 }{\|x_k+d_k\|}-1 \Big ) (x_k + d_k)\\
    &=&  d_k - \frac {\|d_k\|^2 }{\|x_k+d_k\|(1+\|x_k+d_k\|)}(x_k+d_k)\\
    &\stackrel\triangle {=} & d_k-\delta_k\cdot (x_k+d_k),
\end{eqnarray*}
where
\[
\delta_k:=\frac {\|d_k\|^2 }{\|x_k+d_k\|(1+\|x_k+d_k\|)}=\frac 12 \|d_k\|^2+o(\|d_k^2\|).
\]

Since $d_k\in x_k^\perp$, it is easy to get from (\ref{sub:local-N}) that $d_k^T (F'(x_k)d_k+F(x_k))=0$. Consequently,
\[
[\nabla \phi (x_k)]^Td_k= \Big ({\cal A}x_k^{m-1} \Big )^T d_k= -d_k^T[F'(x_k)]^Td_k.
\]
Notice that $[\nabla \phi (x_k)]^Tx_k={\cal A}x_k^m=\lambda_k$, then, we can obtain from the definition of $\delta_k$ that
\begin{eqnarray*}
[\nabla \phi (x_k)]^T (x_k(1)-x_k)&=&[\nabla \phi (x_k)]^T d_k-\delta _k [\nabla \phi (x_k)]^T(x_k+d_k)\\
    &=& - d_k^TF'(x_k)d_k -\frac 12 \|d_k\|^2 [\nabla \phi (x_k)]^Tx_k+o(\|d_k\|^2)\\
    &=&  - d_k^TF'(x_k)d_k -\frac 12 \lambda _k\|d_k\|^2 +o(\|d_k\|^2).
\end{eqnarray*}
In addition, we also have that
\begin{eqnarray*}
(x_k(1)-x_k)^T\nabla ^2 \phi (x_k)(x_k(1)-x_k) &=& d_k^T\nabla ^2 \phi (x_k)d_k + o(\|d_k\|^2)\\
&=&d_k^TF'(x_k)d_k+\lambda_k\|d_k\|^2+o(\|d_k\|^2).
\end{eqnarray*}
Therefore, we derive by Taylor's expansion that for all $k$ sufficiently large,
\begin{eqnarray*}
&&\phi (x_k(1))-\phi (x_k)-\sigma [\nabla \phi (x_k)]^Td_k \\
=&&\phi (x_k(1))- \phi (x_k) +\sigma d_k^TF'(x_k)d_k \\
    =&& [\nabla \phi (x_k)]^T(x_k(1)-x_k)+\frac 12 (x_k(1)-x_k)^T\nabla ^2\phi (x_k) (x_k(1)-x_k)+\\
          && o(\|x_k(1)-x_k\|^2)+\sigma d_k^TF'(x_k)d_k\\
    =&& - d_k^TF'(x_k)d_k -\frac 12 \lambda _k\|d_k\|^2 + \frac 12 d_k^TF'(x_k)d_k+ \frac 12 \lambda_k\|d_k\|^2+\\
    &&\sigma d_k^TF'(x_k)d_k+o(\|d_k\|^2)\\
    =&& -(\frac 12 -\sigma ) d_k^TF'(x_k)d_k+o(\|d_k\|^2).
\end{eqnarray*}
By the use of the second order sufficient condition and the fact that $x^T_kd_k=0$, there is a
positive constant $\eta>0$ such that the inequality $ d_k^TF'(x_k)d_k\ge\eta \|d_k\|^2$ holds for
all $k$ sufficiently large. Consequently, the last equality shows that when $k$ is sufficiently large,
the unit stplength is accepted and the method essentially reduces to the local Newton method.
The quadratic convergence then follows from Theorem \ref{th:quad-local-N}.
\end{proof}

\section{A nonlinear equations based Newton method}\label{EqnNewton}

In this section, we propose a constrained nonlinear equations (\ref{equ-equa}) based Newton method. Define the residual function
\begin{eqnarray}\label{merit}
\theta (x)=\frac 12 \|F(x)\|^2,
\end{eqnarray}
where $F(x)$ is defined by (\ref{equ-equa}). Again, we suppose that $\cal A$ is symmetric. By an elementary deduction, it is easy to get the Jacobian of $F(x)$ as follows
\[
F'(x)=(m-1){\cal A}x^{m-2}-{\cal A}x^m\cdot I - m x \Big ( {\cal A}x^{m-1}\Big )^T.
\]

The relationship between a stationary point of $\theta(x)$ and a $Z$-eigenvector of ${\cal A}$ is given in the following lemma.

\begin{lemma}\label{th:stationary}
A point $x\in {\mathbb B}$ is a stationary point of $\theta (x)$ if and only if it is
a $Z$-eigenvector of $\cal A$ corresponding to the eigenvalue ${\cal A}x^m$.
\end{lemma}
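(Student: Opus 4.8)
Read ``stationary point of $\theta$'' as $\nabla\theta(x)=0$. The lemma then says that among the points of $\mathbb{B}$ the critical points of the merit function $\theta$ are exactly the $Z$-eigenvectors of $\mathcal{A}$. One direction is immediate: if $x\in\mathbb{B}$ is a $Z$-eigenvector with $\mathcal{A}x^{m-1}=(\mathcal{A}x^m)x$, then by definition $F(x)=\mathcal{A}x^{m-1}-(\mathcal{A}x^m)x=0$, hence $\nabla\theta(x)=F'(x)^TF(x)=0$. So the content is the converse, and my plan is to extract it from a single scalar identity obtained by testing the relation $\nabla\theta(x)=0$ against the vector $x$ itself.

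For the converse, first I would transpose the Jacobian formula, using that $\mathcal{A}x^{m-2}$ is symmetric when $\mathcal{A}$ is, to get
\[
\nabla\theta(x)=F'(x)^TF(x)=(m-1)\mathcal{A}x^{m-2}F(x)-(\mathcal{A}x^m)F(x)-m\big(\mathcal{A}x^{m-1}\big)\big(x^TF(x)\big).
\]
On $\mathbb{B}$ one has the orthogonality $x^TF(x)=x^T\mathcal{A}x^{m-1}-(\mathcal{A}x^m)x^Tx=0$, which annihilates the last term, leaving $\nabla\theta(x)=(m-1)\mathcal{A}x^{m-2}F(x)-(\mathcal{A}x^m)F(x)$ for $x\in\mathbb{B}$. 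Now assume $x\in\mathbb{B}$ and $\nabla\theta(x)=0$, i.e.\ $(m-1)\mathcal{A}x^{m-2}F(x)=(\mathcal{A}x^m)F(x)$, and multiply on the left by $x^T$. Using the contraction identity $x^T\mathcal{A}x^{m-2}=(\mathcal{A}x^{m-1})^T$ (permutation symmetry of $\mathcal{A}$) and $x^TF(x)=0$ gives $(m-1)(\mathcal{A}x^{m-1})^TF(x)=0$. Finally I would invoke the projector structure $F(x)=(I-xx^T)\mathcal{A}x^{m-1}$ together with $x^Tx=1$: since $I-xx^T$ is symmetric and idempotent, $(\mathcal{A}x^{m-1})^TF(x)=(\mathcal{A}x^{m-1})^T(I-xx^T)\mathcal{A}x^{m-1}=\|F(x)\|^2$. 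Hence $(m-1)\|F(x)\|^2=0$, and since $m\geq2$ we conclude $F(x)=0$; that is, $\mathcal{A}x^{m-1}=(\mathcal{A}x^m)x$ with $x^Tx=1$, so $x$ is a $Z$-eigenvector of $\mathcal{A}$ with eigenvalue $\mathcal{A}x^m$.

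I do not expect a genuine obstacle; the delicate points are all bookkeeping. One must justify $x^T\mathcal{A}x^{m-2}=(\mathcal{A}x^{m-1})^T$ (equivalently $\mathcal{A}x^{m-2}x=\mathcal{A}x^{m-1}$) from the symmetry of $\mathcal{A}$, and recognize $(\mathcal{A}x^{m-1})^TF(x)=\|F(x)\|^2$ as the statement that $F(x)$ is the orthogonal projection of $\mathcal{A}x^{m-1}$ onto $x^{\perp}$. It is also worth noting that both the cancellation of the rank-one term and the concluding step use $x^Tx=1$, so the hypothesis $x\in\mathbb{B}$ is essential and it is the full gradient $\nabla\theta$, not merely its tangential component, whose vanishing yields the conclusion.
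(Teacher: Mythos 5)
Your proof is correct and follows essentially the same route as the paper's: both arguments contract $\nabla\theta(x)=F'(x)^TF(x)$ with $x$, use $x^TF(x)=0$ and $x^T\mathcal{A}x^{m-2}=(\mathcal{A}x^{m-1})^T$, and arrive at $(m-1)\|F(x)\|^2=0$. The only difference is cosmetic bookkeeping (you discard the rank-one term before contracting with $x$, the paper after), so there is nothing to change.
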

\begin{proof}
Clearly, a $Z$-eigenvector is a stationary of $\theta (x)$. We only need to show that any stationary point in ${\mathbb B}$ is also a $Z$-eigenvector of $\cal A$.

Suppose that $\bar x\in {\mathbb B}$ is a stationary of $\theta (x)$, i.e., $\nabla \theta (\bar x)=0$.
Denote $\bar\lambda ={\cal A}\bar{x}^m$. Notice the fact that $\bar x^TF(\bar x)=0$ and
$\bar x^T{\cal A}\bar x ^{m-2}={\cal A}\bar x^{m-1}$, we get that
\begin{eqnarray*}
0 &=& \bar{x}^T\nabla \theta (\bar x)=\bar x^T [F'(\bar x)]^T F(\bar x)\\
    &=& x^T \Big ((m-1){\cal A}\bar x ^{m-2}- \bar\lambda I-m {\cal A}\bar x^{m-1}\cdot \bar x^T\Big ) F(\bar x)\\
    &=& \Big ((m-1){\cal A}\bar x ^{m-1}- (m+1)\bar\lambda \bar x\Big )^T F(\bar x)\\
    &=& \Big ((m-1){\cal A}\bar x ^{m-1}- (m-1)\bar\lambda \bar x\Big )^T F(\bar x)\\
    &=& (m-1)\|F(\bar x)\|^2.
\end{eqnarray*}
The last equality yields $F(\bar x)=0$. Then, it is equivalent to say that $\bar x$ is a $Z$-eigenvector of $\cal A$ corresponding to the eigenvalue $\bar\lambda$.
\end{proof}

For $x_k\in \mathbb B$, we let $Q_k=I-x_kx_k^T$ and $d_k$ be the Newton direction determined by (\ref{sub:local-N}). Since the columns of $U_{x_k}$ form a basis of $x_k^\bot$, the system (\ref{sub:local-N}) implies
\begin{eqnarray}\label{sub:Newton-eq}
Q_k(F'(x_k)d_k+F(x_k))=0.
\end{eqnarray}
Therefore, we obtain that
\begin{eqnarray}\label{desc:Newton}
[\nabla\theta(x_k)]^Td_k&=&-[F(x_k)]^TF'(x_k)d_k \nonumber \\
&=&\Big ({\cal A}x_k^{m-1}\Big )^TQ_kF'(x_k)d_k \nonumber \\
&=&-\Big ({\cal A}x_k^{m-1}\Big )^T Q_k F(x_k) \nonumber \\
&=&-\|F(x_k)\|^2\le 0.
\end{eqnarray}
If $x_k$ is not an eigenvector, i.e., $F(x_k)\neq 0$, then, it holds that $[\nabla\theta(x_k)]^Td_k<0$.

Denote again
\[
x_k(\alpha)=\frac {x_k+\alpha d_k}{\|x_k+\alpha d_k\|}.
\]
Similar to the derivation of (\ref{mean}), it is not difficult to get that
\[
\theta (x_k(\alpha))= \theta (x_k) + \alpha \nabla \theta (x_k) (I-x_kx_k^T)d_k +O(\|\alpha d_k\|^2).
\]
The inequality (\ref{desc:Newton}) ensures that the Newton direction $d_k$ provides a curve descent direction for $\theta (x)$ at $x_k$ as long as $x_k$ is not an eigenvector of $\cal A$. In other words, we have the following proposition.

\begin{proposition}
Suppose that $x_k\in \mathbb B$ is not a $Z$-eigenvector of $\cal A$. The Newton direction determined by (\ref{sub:local-N}), if exists, is a curve descent direction of $\theta$ at $x_k$.
\end{proposition}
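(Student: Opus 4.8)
The plan is to combine the descent identity already established in (\ref{desc:Newton}) with the first-order expansion of $\theta$ along the projected curve $x_k(\alpha)$, in exactly the manner used to motivate Definition~\ref{def:c-descent}. The starting observation is that the Newton direction supplied by (\ref{sub:local-N}) satisfies $x_k^Td_k=0$ by the second equation of that system; hence $(I-x_kx_k^T)d_k=d_k$. This is the only structural fact needed, and it lets the curve expansion of $\theta$ be rewritten purely in terms of $[\nabla\theta(x_k)]^Td_k$.

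Next I would invoke the expansion recorded just before the proposition,
\[
\theta(x_k(\alpha))=\theta(x_k)+\alpha\,[\nabla\theta(x_k)]^T(I-x_kx_k^T)d_k+O(\|\alpha d_k\|^2),
\]
substitute $(I-x_kx_k^T)d_k=d_k$, and then use $[\nabla\theta(x_k)]^Td_k=-\|F(x_k)\|^2$ from (\ref{desc:Newton}) to obtain
\[
\theta(x_k(\alpha))-\theta(x_k)=-\alpha\|F(x_k)\|^2+O(\alpha^2\|d_k\|^2).
\]
Since $x_k$ is not a $Z$-eigenvector of $\cal A$, the definition of $F$ gives $F(x_k)\neq0$, so $\|F(x_k)\|^2>0$ and the right-hand side is strictly negative for all $\alpha>0$ sufficiently small. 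By Definition~\ref{def:c-descent}, $d_k$ is therefore a curve descent direction of $\theta$ at $x_k$, which is the claim.

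I do not expect any real obstacle: the whole content sits in (\ref{desc:Newton}) and the curve expansion, both of which are already in hand. The one place to be careful is the reduction $(I-x_kx_k^T)d_k=d_k$, which depends on the orthogonality constraint $x_k^Td_k=0$ built into (\ref{sub:local-N}); once that is noted, the conclusion is immediate. As the statement already flags, the argument only makes sense when the Newton direction exists, i.e. when $U_{x_k}^TF'(x_k)U_{x_k}$ is nonsingular, but this is simply assumed as a hypothesis and no further work is required.
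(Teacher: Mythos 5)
Your proposal is correct and follows essentially the same route as the paper: the paper also derives the proposition directly from the identity $[\nabla\theta(x_k)]^Td_k=-\|F(x_k)\|^2$ in (\ref{desc:Newton}) together with the first-order curve expansion of $\theta(x_k(\alpha))$, using $x_k^Td_k=0$ so that $(I-x_kx_k^T)d_k=d_k$. No gaps.
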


Based on the last proposition, we propose a nonlinear equations based Newton method for finding
a $Z$-eigenpair of a symmetric tensor. The steps of the method are given in Algorithm \ref{alg3}.

\begin{algorithm}[!htbp]
\caption{(A nonlinear equations based Newton method for problem (\ref{merit})).}\label{alg3}
\begin{algorithmic}[1]
\STATE Given a constant $\sigma\in (0,1)$ and an initial point $x^0\in \mathbb{B}$. Let $k:=0$ and $\lambda _0={\cal A}x_0^m$.
\WHILE{$F(x^k)\neq 0$}
\STATE Solve the linear equations system (\ref{sub:local-N}) to get $d_k$. If the system
(\ref{sub:local-N}) is not solvable, then, let $d_k=-\nabla\theta(x_k)$ or the solution of (\ref{dir:LM}). Let
$$
x_k(\alpha)=\frac{x_k+\alpha d_k}{\|x_k+\alpha d_k\|}.
$$
\STATE Find $\alpha_k\in(0,1]$ by a backtracking process satisfying
\begin{equation}\label{armijo3}
\theta (x_k(\alpha_k))\le \theta (x_k) +\sigma \alpha _k[\nabla\theta(x_k)]^Td_k.
\end{equation}
\STATE Let $x_{k+1}=x_k(\alpha_k)$ and $\lambda_{k+1}={\cal A}x_{k+1}^m$. Let $k:=k+1$.
\ENDWHILE
\end{algorithmic}
\end{algorithm}

Similar to the proof of Theorems \ref{converg2} and \ref{th:quad-N}, we can establish the global and quadratic convergence of Algorithm \ref{alg3}.

\begin{theorem}\label{converg3}
Let $\{x_k\}$ be generated by Algorithm \ref{alg3}. If there is an accumulation point $\bar{x}$ of $\{x_k\}$ at which the second order sufficient condition holds, then, $\bar{x}$ is a $Z$-eigenvector of ${\cal A}$ with ${\cal A}\bar{x}^m$ being the corresponding $Z$-eigenvalue. If we further suppose that the whole sequence $\{x_k\}$ converges to $\bar x$, then, the convergence rate of $\{x_k\}$ is quadratic. Moreover, $\alpha_k=1$ is accepted for all $k$ sufficiently large.
\end{theorem}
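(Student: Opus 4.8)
The plan is to mirror, almost verbatim, the two-part argument already carried out for Algorithm \ref{alg2} in Theorems \ref{converg2} and \ref{th:quad-N}, substituting the merit function $\theta$ for $\phi$ throughout and using the fact that the Newton direction is a curve descent direction for $\theta$ (the Proposition immediately preceding Algorithm \ref{alg3}), together with the key identity $[\nabla\theta(x_k)]^Td_k=-\|F(x_k)\|^2$ established in (\ref{desc:Newton}). Concretely, I would proceed in three stages, matching the three assertions in the statement.

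First, the global convergence part. Since the line search (\ref{armijo3}) forces $\{\theta(x_k)\}$ to be nonincreasing and $\theta\ge 0$, the sequence $\{\theta(x_k)\}$ converges, hence $\sum_k\alpha_k\big(-[\nabla\theta(x_k)]^Td_k\big)<\infty$, i.e. $\alpha_k\|F(x_k)\|^2\to 0$. Take the accumulation point $\bar x$ along a subsequence $k\in K$ with $x_k\to\bar x$; near $\bar x$ the second order sufficient condition holds, so by Proposition \ref{prop:second} the matrix $U_{\bar x}^TF'(\bar x)U_{\bar x}$, hence $P(x)$, is uniformly nonsingular in a neighbourhood, which gives $\|d_k\|=O(\|x_k\|)=O(1)$ and in fact $d_k\to\bar d$ along $K$ (passing to a further subsequence if needed). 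Then one splits into the two cases exactly as in the proof of Theorem \ref{converg2}: if $\liminf_{k\in K}\alpha_k>0$ then $[\nabla\theta(\bar x)]^T\bar d=0$ directly; if $\alpha_k\to 0$ along $K$, the backtracking failure inequality $\theta(x_k(\rho^{-1}\alpha_k))-\theta(x_k)>\sigma\rho^{-1}\alpha_k[\nabla\theta(x_k)]^Td_k$, divided by $\rho^{-1}\alpha_k$ and passed to the limit via the mean value theorem and relation (\ref{iter2}) applied to $\theta$, yields $[\nabla\theta(\bar x)]^T\bar d\ge\sigma[\nabla\theta(\bar x)]^T\bar d$ with $\sigma\in(0,1)$ and $[\nabla\theta(\bar x)]^T\bar d\le 0$, forcing $[\nabla\theta(\bar x)]^T\bar d=0$. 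But by (\ref{desc:Newton}), $[\nabla\theta(x_k)]^Td_k=-\|F(x_k)\|^2$, so in the limit $\|F(\bar x)\|^2=0$, i.e. $F(\bar x)=0$; by Lemma \ref{th:stationary} (or (\ref{equ-equa})) this means $\bar x$ is a $Z$-eigenvector with eigenvalue ${\cal A}\bar x^m$.

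Second, once the whole sequence $\{x_k\}$ converges to such a $\bar x$, for $k$ large the system (\ref{sub:local-N}) is uniquely solvable (positive definiteness of $U_{x_k}^TF'(x_k)U_{x_k}$ is an open condition), so Algorithm \ref{alg3} reduces to the feasible local Newton method. The only remaining point is that the full Newton step $\alpha_k=1$ eventually satisfies (\ref{armijo3}). For this I would repeat the Taylor-expansion computation of Theorem \ref{th:quad-N} with $\theta$ in place of $\phi$: using $x_k(1)-x_k=d_k-\delta_k(x_k+d_k)$ with $\delta_k=\tfrac12\|d_k\|^2+o(\|d_k\|^2)$, the identities $d_k^T(F'(x_k)d_k+F(x_k))=0$ and $x_k^TF(x_k)=0$, and $\nabla\theta(x_k)=[F'(x_k)]^TF(x_k)$, one expands $\theta(x_k(1))-\theta(x_k)-\sigma\alpha_k[\nabla\theta(x_k)]^Td_k$ and finds it equals $-(1-\sigma)\|F(x_k)\|^2+o(\|F(x_k)\|^2)$ (or a similar expression with a positive leading coefficient, since here $\sigma\in(0,1)$ and the leading term $[\nabla\theta(x_k)]^Td_k=-\|F(x_k)\|^2$ dominates the quadratic remainder $O(\|d_k\|^2)=O(\|F(x_k)\|^2)$ — the cancellation is cleaner than in the $\phi$ case because $\theta$ is itself quadratic in $F$). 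Hence (\ref{armijo3}) holds with $\alpha_k=1$ for all large $k$. Finally, quadratic convergence follows because the method then coincides with the local Newton method, to which Theorem \ref{th:quad-local-N} applies (noting that local Newton for $\theta$ via (\ref{sub:local-N}) is exactly the local Newton method of Section \ref{OptNewton}).

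The main obstacle I anticipate is the $\alpha_k=1$ acceptance step: one must check that the quadratic remainder terms in the Taylor expansion of $\theta(x_k(1))$ — which are $O(\|d_k\|^2)$ and, since $\|d_k\|=O(\|F(x_k)\|)$ near $\bar x$, of the same order as the first-order term $-\|F(x_k)\|^2$ — do not overwhelm it. This requires keeping track of the exact constants (the $\tfrac12$ from $\delta_k$, the curvature contribution $d_k^T[F'(x_k)]^2 d_k + (\text{cross terms})$, and $\lambda_k$-type terms) and invoking $\sigma<1$; unlike in Theorem \ref{th:quad-N}, the presence of the squared Jacobian in $\nabla^2\theta$ makes the bookkeeping slightly heavier, but the sign of the leading term is favourable, so no genuinely new idea is needed. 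A secondary technical point is justifying $d_k\to\bar d$ along the subsequence and $\|d_k\|$ boundedness globally along $K$, which I would handle exactly as in Theorem \ref{converg2} using the uniform nonsingularity of $P(x)$ near $\bar x$.
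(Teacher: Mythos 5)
Your overall plan is exactly what the paper intends (its ``proof'' of Theorem \ref{converg3} is only the sentence ``similar to the proofs of Theorems \ref{converg2} and \ref{th:quad-N}''), and the global-convergence half of your argument is sound: the monotonicity of $\{\theta(x_k)\}$, the identity $[\nabla\theta(x_k)]^Td_k=-\|F(x_k)\|^2$ from (\ref{desc:Newton}), the two-case analysis of $\alpha_k\|F(x_k)\|^2\to 0$, and Lemma \ref{th:stationary} together give $F(\bar x)=0$. (Minor slips there: $\|d_k\|=O(\|x_k\|)$ should read $\|d_k\|=O(\|F(x_k)\|)$ via the uniform nonsingularity of $P(x)$, and when the fallback direction $d_k=-\nabla\theta(x_k)$ is used the descent quantity is $-\|\nabla\theta(x_k)\|^2$ rather than $-\|F(x_k)\|^2$; neither affects the conclusion.)

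The genuine problem is in the unit-step acceptance, precisely at the place you flag as a mere bookkeeping issue. The Armijo test (\ref{armijo3}) at $\alpha_k=1$ reads
\begin{equation*}
\theta(x_k(1))\le \theta(x_k)+\sigma[\nabla\theta(x_k)]^Td_k=\tfrac12\|F(x_k)\|^2-\sigma\|F(x_k)\|^2=\bigl(\tfrac12-\sigma\bigr)\|F(x_k)\|^2 .
\end{equation*}
Since $\theta(x_k(1))=\tfrac12\|F(x_{k+1})\|^2=O(\|x_k-\bar x\|^4)=o(\|F(x_k)\|^2)$ by the local quadratic convergence (and $\|F(x_k)\|\ge c\|x_k-\bar x\|$ near $\bar x$ by the second order sufficient condition), the left-hand side minus the right-hand side is $-(\tfrac12-\sigma)\|F(x_k)\|^2+o(\|F(x_k)\|^2)$, \emph{not} $-(1-\sigma)\|F(x_k)\|^2+o(\|F(x_k)\|^2)$ as you assert. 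The discrepancy is exactly the second-order term you propose to absorb: near a zero of $F$ one has $\tfrac12 d_k^T\nabla^2\theta(x_k)d_k\approx\tfrac12\|F(x_k)\|^2$, which cancels half of the first-order decrease $-\|F(x_k)\|^2$ (equivalently: the largest decrease achievable is $\theta(x_k)-0=\tfrac12\|F(x_k)\|^2$, while the test demands a decrease of $\sigma\|F(x_k)\|^2$). Consequently $\alpha_k=1$ is eventually accepted if and only if $\sigma<\tfrac12$; for $\sigma\in[\tfrac12,1)$, which Algorithm \ref{alg3} permits, the full step is rejected and your argument breaks down. The correct conclusion therefore carries the same restriction $\sigma<\tfrac12$ as Theorem \ref{th:quad-N} (consistent with the numerical choice $\sigma=0.005$), and your proof needs this hypothesis added and the coefficient corrected; with that fix the remainder of your argument (reduction to the local Newton method and appeal to Theorem \ref{th:quad-local-N}) goes through.
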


\section{Numerical results}\label{num}

In this section, we are going to test the performance of the proposed Newton methods, namely Algorithms \ref{alg2} and \ref{alg3}, for finding one $Z$-eigenpair of a symmetric tensor $\cal A$. All numerical experiments are done in MATLAB R2016a on a personal laptop with Intel(R) Core(TM) CPU i5-8250U @1.80GHz and 8 GB memory running Microsoft Windows 10. Throughout, we employ the tensor toolbox \cite{TensorT} to proceed tensor computation.

The test tensors come from \cite{CZ13,NW14,ZNG20}. Details are given below.

\vspace{2mm}

\noindent{\bf Problem 1.} (\cite{CZ13}) Tensor ${\cal A}\in\mathbb{S}^{[4,2]}$ is a symmetric and irreducible
nonnegative tensor with the elements given by
\[
a_{1111}=a_{2222}=\frac{4}{\sqrt{3}}, a_{1112}=a_{1121}=a_{1211}=a_{2111}=1,
\]
\[
a_{1222}=a_{2122}=a_{2212}=a_{2221}=1
\]
 and $a_{i_1i_2i_3i_4}=0$ elsewhere.

\noindent{\bf Problem 2.} (\cite{ZNG20}) Tensor ${\cal A}\in\mathbb{S}^{[4,3]}$ is nonnegative symmetric which is defined by
\[
a_{1111}=0.2883,\quad a_{1112}=0.0031,\quad a_{1113}=0.1973,\quad a_{1122}=0.2485,
\]
\[
a_{1123}=0.2939,\quad a_{1133}=0.3847,\quad a_{1222}=0.2972,\quad  a_{1223}=0.1862,
\]
\[
a_{1233}=0.0919,\quad a_{1333}=0.3619,\quad a_{2222}=0.1241,\quad a_{2223}=0.3420,
\]
\[
a_{2233}=0.2127,\quad a_{2333}=0.2727,\quad a_{3333}=0.3054.
\]

\noindent{\bf Problem 3.} (\cite{NW14}) The elements of the symmetric tensor ${\cal A}\in\mathbb{S}^{[3,n]}$ are given by
$$
a_{i_ii_2i_3}=\frac{(-1)^{i_1}}{i_1}+\frac{(-1)^{i_2}}{i_2}+\frac{(-1)^{i_3}}{i_3},\quad \forall i_1,i_2,i_3\in[n].
$$

\noindent{\bf Problem 4.} (\cite{NW14}) The elements of the symmetric tensor ${\cal A}\in\mathbb{S}^{[4,n]}$ are given by
$$
a_{i_ii_2i_3i_4}=\tan(i_1)+\tan(i_2)+\tan(i_3)+\tan(i_4),\quad \forall i_1,\ldots,i_4\in[n].
$$

\noindent{\bf Problem 5.} (\cite{NW14}) The elements of the symmetric tensor ${\cal A}\in\mathbb{S}^{[5,n]}$ are given by
$$
a_{i_ii_2i_3i_4i_5}=(-1)^{i_1}\log(i_1)+\cdots+(-1)^{i_5}\log(i_5),\quad \forall i_1,\ldots,i_5\in[n].
$$

While doing numerical tests, we set the initial point $x_0$ to be a random nonnegative vector
in the feasible set $\mathbb{B}$. The parameter $\sigma$'s in (\ref{armijo2}) and (\ref{armijo3}) are
set to be $0.01$ and $0.005$, respectively. The backtracking line search is implemented by letting
$\alpha_k=\max\{0.1^i,\; i=0,1,\ldots\}$ and $\alpha_k=\max\{0.073^i,\; i=0,1,\ldots\}$ such that $\alpha_k$
satisfies line search conditions in Algorithms \ref{alg2} and \ref{alg3}. For all test problems and any method,
we set the termination criterion to be
\begin{equation}\label{res}
Res:=\|F(x_k)\|=\|(I-x_kx_k^T){{\cal A}}x^{m-1}_k\|\le10^{-10}.
\end{equation}
We also stop the iterative process if the number of iterations reaches to 300 but the termination condition (\ref{res}) is not satisfied. In this case, we regard the method as failing to address that problem.

We are going to use the total number of iterations and the computation time used to obtain one $Z$-eigenpair to mainly indicate the efficiency of these two algorithms. For each test problem, we test both algorithms with 100 different initial points. Tables \ref{tab1}-\ref{tab4} list the average performance of the two methods. The columns of those tables take the following meaning:
\begin{center}
\begin{tabular}{ll}\\ \hline
``iter": &  the average number of iterations;\\
``iter-n"; & the average number of non-Newton steps used;\\
``in-iter": &  the average number of line search;\\
``Cpu": &  the average computing time in seconds;\\
``Res": &  the average last residual given by (\ref{res});\\
``suc\%": & the rate of successful terminations.\\ \hline
\end{tabular}\vspace{3mm}
\end{center}

We first test the performance of Algorithms \ref{alg2} and \ref{alg3} on Problems 1 and 2. As the largest
$Z$-eigenvalues of these two tensors are known, which are $3.1754$ and $2.0690$, respectively, we investigate if our methods can find the largest $Z$-eigenvalues of those two problems, although the purposes of these two methods are finding one $Z$-eigenpair of ${\cal A}$. Table \ref{tab1} lists the performance of the two methods, where the column ``Occ.\%" represents frequency of occurrence of the largest $Z$-eigenvalue.

\begin{table}[!htbp]
\caption{Numerical results for Problems 1 and 2}\label{tab1}
{\scriptsize
\def\temptablewidth{1\textwidth}
\begin{tabular*}{\temptablewidth}{@{\extracolsep{\fill}}ccccccccccc}\toprule
Alg. & Prob. & \textsf{$\lambda^*$} & \textsf{iter}& \textsf{iter-n}& \textsf{in-iter} & \textsf{Cpu} & \textsf{Res}& \textsf{suc\%}& \textsf{Occ.\%} \\ \midrule
Algorithm \ref{alg2} & Prob. 1 &  3.1754 & 6.58 & 1.01 & 0.66 & 0.0066 & 3.69e-12 & 97  & 100 \\
                     & Prob. 2 &  2.0690 & 4.53 & 0.36 & 0.38 & 0.0044 & 7.98e-12 & 97  & 100 \\\hline
Algorithm \ref{alg3} & Prob. 1 &  3.1754 & 4.88 & 0    & 0.26 & 0.0054 & 2.54e-12 & 100 & 74 \\
                     & Prob. 2 &  2.0690 & 5.13 & 0    & 0.17 & 0.0052 & 7.82e-12 & 100 & 64 \\
 \bottomrule
\end{tabular*}}
\end{table}

The results in Table \ref{tab1} show that, starting from an arbitrary initial point in the feasible set $\mathbb{B}$, both methods can find a Z-eigenpair of the tensor  with a high successful rate. It is interesting to notice that Algorithm \ref{alg2} can always find the largest $Z$-eigenvalue of ${\cal A}$ if it terminates regularly. In many cases, Algorithm \ref{alg3} can also get the largest $Z$-eigenvalue of the problem.

Next, we test Algorithms \ref{alg2} and \ref{alg3} on Problems 3-5 with different dimension $n$. For each problem with different dimension $n$, we tested both methods  with 100 different initial points $x_0$ which are random nonpositive vectors in the feasible set $\mathbb{B}$. Tables \ref{tab2}, \ref{tab3} and \ref{tab4} show the average performance of the proposed methods. In those tables, we do not list ``suc\%" for Algorithm \ref{alg3} because the rate of successful terminations of Algorithm \ref{alg3} is always 100\%.
\begin{center}
\begin{table}[!htbp]
\caption{Numerical results for Problem 3}\label{tab2}
{\scriptsize
\def\temptablewidth{1\textwidth}
\begin{tabular*}{\temptablewidth}{@{\extracolsep{\fill}}c|cc|ccc}\toprule
& Algorithm \ref{alg2}&& Algorithm \ref{alg3} \\
$n$ & \textsf{iter} / \textsf{iter-n}/ \textsf{in-iter}/ \textsf{Cpu} / \textsf{Res}/ \textsf{suc\%} && \textsf{iter} / \textsf{iter-n}/ \textsf{in-iter}/ \textsf{Cpu} / \textsf{Res} \\ \midrule
  $10$ &  7.01 / 1.48 / 1.88 / 0.0079 / 3.78e-12 / 91  & & 16.57 / 0.01 / 0.02 / 0.0159 / 3.90e-11 \\
  $20$ &  6.94 / 1.57 / 1.85 / 0.0089 / 5.77e-12 / 95  & & 18.82 / 0    / 0    / 0.0176 / 2.01e-11 \\
  $30$ &  6.82 / 1.43 / 1.41 / 0.0101 / 7.76e-12 / 100 & & 18.74 / 0.01 / 0    / 0.0228 / 1.80e-11 \\
  $40$ &  6.74 / 1.46 / 1.24 / 0.0094 / 6.44e-12 / 93  & & 18.86 / 0.01 / 0    / 0.0246 / 2.25e-11 \\
  $50$ &  6.86 / 1.39 / 1.67 / 0.0107 / 7.79e-12 / 94  & & 18.94 / 0    / 0    / 0.0256 / 3.24e-11 \\
  $60$ &  6.67 / 1.44 / 1.53 / 0.0111 / 9.63e-12 / 90  & & 18.88 / 0    / 0    / 0.0270 / 3.23e-11 \\
  $70$ &  6.72 / 1.38 / 1.73 / 0.0149 / 6.38e-12 / 94  & & 19.09 / 0    / 0    / 0.0325 / 3.88e-11 \\
  $80$ &  6.94 / 1.49 / 2.04 / 0.0156 / 5.44e-12 / 98  & & 31.56 / 28.16 / 79.26 / 0.1663 / 3.82e-11 \\
  \bottomrule
\end{tabular*}}
\end{table}

\begin{table}[!htbp]
\caption{Numerical results for Problem 4}\label{tab3}
{\footnotesize
\def\temptablewidth{1\textwidth}
\begin{tabular*}{\temptablewidth}{@{\extracolsep{\fill}}c|cc|cccccc}\toprule
& Algorithm \ref{alg2}&& Algorithm \ref{alg3} \\
$n$ & \textsf{iter} / \textsf{iter-n}/ \textsf{in-iter}/ \textsf{Cpu} / \textsf{Res}/ \textsf{suc\%} && \textsf{iter} / \textsf{iter-n}/ \textsf{in-iter}/ \textsf{Cpu} / \textsf{Res} \\ \midrule
  $10$ &  6.49 / 1.18 / 3.69 / 0.0087 / 4.86e-12 / 95 & & 6.28 / 1.36 / 0.07 / 0.0087 / 3.08e-12  \\
  $20$ &  6.72 / 1.22 / 6.25 / 0.0147 / 1.50e-11 / 95 & & 6.91 / 1.69 / 0.08 / 0.0111 / 1.46e-11 \\
  $30$ &  6.86 / 1.43 / 7.32 / 0.0224 / 2.33e-11 / 91 & & 7.35 / 1.79 / 0.11 / 0.0147 / 2.33e-11\\
  $40$ &  5.88 / 1.17 / 5.90 / 0.0534 / 3.11e-11 / 82 & & 7.73 / 1.97 / 0.17 / 0.0264 / 2.90e-11  \\
  $50$ &  5.65 / 1.01 / 5.18 / 0.0548 / 5.74e-11 / 94 & & 7.54 / 1.80 / 0.12 / 0.0424 / 5.96e-11 \\
  $60$ &  6.07 / 1.00 / 5.30 / 0.1114 / 6.36e-11 / 57 & & 7.75 / 1.77 / 0.15 / 0.0827 / 6.45e-11 \\
  $70$ &  7.18 / 1.00 / 6.06 / 0.2307 / 7.60e-11 / 51 & & 8.18 / 2.05 / 0.14 / 0.1535 / 7.14e-11 \\
  \bottomrule
\end{tabular*}}
\end{table}

\begin{table}[!htbp]
\caption{Numerical results for Problem 5}\label{tab4}
{\footnotesize
\def\temptablewidth{1\textwidth}
\begin{tabular*}{\temptablewidth}{@{\extracolsep{\fill}}c|cc|ccc}\toprule
& Algorithm \ref{alg2}&& Algorithm \ref{alg3} \\
$n$ & \textsf{iter} / \textsf{iter-n}/ \textsf{in-iter} / \textsf{Cpu} / \textsf{Res}/ \textsf{suc\%} &&\textsf{iter} / \textsf{iter-n}/ \textsf{in-iter}/ \textsf{Cpu} / \textsf{Res} \\ \midrule
  $10$ &  6.89 / 1.24 / 3.58 / 0.0129 / 8.38e-12 / 95 & &  23.96 / 0.06 / 0 / 0.0291 / 5.20e-11\\
  $20$ &  8.12 / 1.63 / 6.97 / 0.0603 / 7.92e-12 / 90 & &  22.08 / 1.00 / 0 / 0.0884 / 5.20e-11 \\
  $30$ &  8.25 / 1.65 / 6.34 / 0.3224 / 1.33e-11 / 100 & & 22.89 / 1.00 / 0 / 0.5200 / 4.14e-11 \\
  $40$ &  8.29 / 2.35 / 10.27 / 1.6514 / 2.36e-11 / 91 & & 23.09 / 1.00 / 0 / 2.0800 / 2.72e-11 \\
  \bottomrule
\end{tabular*}}
\end{table}

\end{center}

The results in Tables \ref{tab2}-\ref{tab4} show the following feature:
\begin{itemize}
  \item Algorithm \ref{alg3} can always get a $Z$-eigenpair of a symmetric tensor while Algorithm \ref{alg2} can find a $Z$-eigenpair for most symmetric tensors.
  \item In the case both Algorithms \ref{alg2} and \ref{alg3} stop at a $Z$-eigenpair of the tensor, Algorithm \ref{alg2} generally needs less iterative numbers as well as computation time. The sizes of the problem seems not to affect the numbers of iterations much.
\end{itemize}

\section{Conclusion}\label{con}

By reformulating the tensor $Z$-eigenpair problem into an optimization problem, we proposed a class of feasible descent methods for finding a $Z$-eigenpair of a symmetric tensor and established their convergence, among which, a feasible Newton method possesses global and quadratic convergence properties.
Based on an equivalent nonlinear equations to the $Z$-eigenvalue problem, we proposed another Newton method and established its global and quadratic convergence. For both Newton methods, the unit steplength is acceptable for all $k$ sufficiently large. As far as we know, there is no other Newton method that can accept the unit steplength. Our numerical results showed the efficiency of the proposed Newton methods in finding one $Z$-eigenpair of a symmetric tensor. However, the proposed Newton methods depend on the symmetry of the involved tensors. It is interesting to develop globally and quadratically convergent Newton's methods for finding a $Z$-eigenpair of a non-symmetric tensor.

\end{document}